\def\final{0}  
\def\iflong{\iffalse}
\newcommand{\kristof}[1]{{\color{red}[{\textbf{Kristóf:}  #1}]\marginpar{\color{red}*}}}
\newcommand{\marci}[1]{{\color{blue}[{ \textbf{Marci:}  #1}]\marginpar{\color{blue}*}}}
\newcommand{\tlaci}[1]{{\color{purple}[{ \textbf{TLaci:}  #1}]\marginpar{\color{purple}*}}}
\newcommand{\kristof}[1]{}
\newcommand{\marci}[1]{}
\newcommand{\tlaci}[1]{}
\newcommand{\mylabel}[2]{#2\def\@currentlabel{#2}\label{#1}}
\theoremstyle{plain}
\newtheorem{thm}{Theorem}[section]
\newtheorem{lem}[thm]{Lemma}
\newtheorem{cor}[thm]{Corollary}
\newtheorem{cl}[thm]{Claim}
\newtheorem{prop}[thm]{Proposition}
\newtheorem{qu}[thm]{Question}
\theoremstyle{definition}
\newtheorem{ex}[thm]{Example}
\newtheorem{rem}[thm]{Remark}
\newcommand*{\claimproofname}{Proof of claim.}
\newenvironment{claimproof}[1][\claimproofname]{\begin{proof}[#1]}{\end{proof}}
\newcommand{\R}{\mathbb{R}}
\newcommand{\Z}{\mathbb{Z}}
\newcommand{\N}{\mathbb{N}}
\newcommand{\dH}[1]{d_{\Haus_{2^{#1}}}} 
\newcommand{\bR}{\mathbb{R}}
\newcommand{\bZ}{\mathbb{Z}}
\newcommand{\cB}{\mathcal{B}}
\newcommand{\cH}{\mathcal{H}}
\newcommand{\cI}{\mathcal{I}}
\newcommand{\cP}{\mathcal{P}}
\newcommand{\cR}{\mathcal{R}}
\newcommand{\Tb}{\mathcal{T}}
\newcommand{\rhog}{\rho_\Gb}
\def\Gb{\mathbf{G}}\def\Hb{\mathbf{H}}
\def\Tb{\mathbf{T}}
\newcommand{\mm}{\text{\sf mm}}
\newcommand{\mmp}{\mm_+}
\newcommand{\bmm}{\text{\sf bmm}}
\newcommand{\bmmp}{\bmm_+}
\newcommand*\diff{\mathop{}\!\mathrm{d}}
\newcommand{\fg}{\varphi}
\newcommand{\eps}{\varepsilon}
\newcommand{\Haus}{{\text{\rm Haus}}}
\def\one{{\mathbbm1}}
\newcommand{\cost}{\textrm{\rm cost}}
\newcommand{\conv}{\textrm{conv}}
\title{Cycle Matroids of Graphings: From Convergence to Duality}
\author{
Kristóf Bérczi\thanks{MTA-ELTE Matroid Optimization Research Group and HUN-REN–ELTE Egerváry Research Group, Department of Operations Research, Eötvös Loránd University, and HUN-REN Alfréd Rényi Institute of Mathematics, Budapest, Hungary. Email: \texttt{kristof.berczi@ttk.elte.hu}.}
\and
Márton Borbényi\thanks{Department of Computer Science, Eötvös Loránd University and HUN-REN Alfréd Rényi Institute of Mathematics, Budapest, Hungary. Email: \texttt{marton.borbenyi@ttk.elte.hu}.}
\and
László Lovász\thanks{HUN-REN Alfréd Rényi Institute of Mathematics, Budapest, Hungary. Email: \texttt{laszlo.lovasz@ttk.elte.hu}.}
\and
László Márton Tóth\thanks{HUN-REN Alfréd Rényi Institute of Mathematics, Budapest, Hungary. Email: \texttt{toth.laszlo.marton@renyi.hu}.}
\\[-3.0ex] 
}
\date{}
\begin{document}

\maketitle
\tableofcontents

 \newpage

\begin{abstract}
A recent line of research has concentrated on exploring the links between analytic and combinatorial theories of submodularity, uncovering several key connections between them. In this context, Lovász initiated the study of matroids from an analytic point of view and introduced the cycle matroid of a graphing~\cite{lovasz2023matroid}. Motivated by the limit theory of graphs, the authors introduced a form of right-convergence, called quotient-convergence, for a sequence of submodular setfunctions, leading to a notion of convergence for matroids through their rank functions~\cite{berczi2024quotient}. 

In this paper, we study the connection between local-global convergence of graphs and quotient-convergence of their cycle matroids. We characterize the exposed points of associated convex sets, forming an analytic counterpart of matroid independence- and base-polytopes. Finally, we consider dual planar graphings and show that the cycle matroid of one is the cocycle matroid of its dual if and only if the underlying graphings are hyperfinite.
\medskip

\noindent \textbf{Keywords:} Cycle matroid, Duality, Exposed points, Graphings, Matroid limits, Cost
\end{abstract}

\section{Introduction}

Cycle matroids bridge graph theory and matroid theory by providing matroid theoretic counterparts of graph concepts such as cycles, cuts, and spanning trees, thus leading to a powerful framework for understanding and solving various combinatorial and optimization problems. The matroidal perspective allows for a deeper analysis of graph properties and leads to efficient algorithms in network design, electrical engineering, and operations research. In particular, the cycle matroid of a graph $G$ is characterized by its rank function, where the rank of a set $F\subseteq E(G)$ of edges is $r_G(F)=|V(G)|-|\{\textrm{$F$-connected components of $G$}\}|$; see Section~\ref{sec:prelim} for properties of matroid rank functions.

There has been an extensive line of research on extending the notion of cycle matroid to infinite graphs, for example for connected locally finite graphs. Probably the simplest notion for infinite graphs is the finitary cycle matroid whose circuits are exactly the finite cycles. However, an unfortunate feature of finitary matroids is that excluding infinite circuits spoils duality. As a workaround, Bruhn, Diestel, Kriesell, Pendavingh and Wollan~\cite{bruhn2013axioms} proposed five equivalent sets of axioms leading to a non-finitary theory of infinite matroids. These axioms were in fact shown to be equivalent to the notion of B-matroids introduced by Higgs~\cite{higgs1969matroids}. Using these axioms, Bruhn and Diestel~\cite{bruhn2011infinite} provided further examples by defining the circuits of the matroid to be the edge sets of topological circuits of compactifications of infinite graphs obtained by adding the space of ends to the graph. Later, Bowler, Carmesin and Christian~\cite{bowler2018infinite} introduced a class of infinite cycle matroids that unifies previous constructions. Motivated by the work of Ne\v{s}et\v{r}il and Ossona de Mendez~\cite{nesetril} on first order convergence of graphs, Kardoš, Král', Liebenau, and Mach~\cite{kardovs2017first} introduced the notion of first order convergence of linearly representable matroids. Since graphic matroids are linearly representable, this also provides a convergence notion for such matroids. 

In addition to the results discussed above, however, there have been no or only moderate work concerning graphs with an underlying measurable structure. In this paper we study the cycle matroids of graphings, as graphings are natural limit objects in the context of sparse graph limits. A \emph{graphing} is a bounded degree graph $\Gb=(J,\mu,E)$, where $(J,\mu)$ is a standard Borel probability space, the edge set $E$ is a Borel subset of $\binom{J}{2}$ with a certain measure preserving property; this property allows one to define the measure $\eta_\Gb$ on the edge set, see Subsection~\ref{subsec:prelim_graphing} for the full definition. Graphings serve as limit objects in sparse graph limit theory. Recently, Lovász~\cite{lovasz2023matroid} initiated a limit theory of cycle matroids of bounded-degree graphs, and introduced the cycle matroid of a graphing $\Gb =(J,\mu,E)$ by defining its rank function as
\begin{equation} \label{eqn:graphing_rank}
    \rho_\Gb(F) = 1- \mathbb{E}_x\left[ \frac{1}{|V(\Gb^F_x)|} \right],
\end{equation}
where $F \subseteq E$ is Borel, $x$ is a uniform random point from $\mu$, $\Gb^F_x$ denotes its connected component in the graphing $\Gb^Y=(J,\mu, F)$, and $V(\Gb^F_x)$ denotes the set of vertices of the component. In~\cite{lovasz2023matroid}, it was shown that $\rho_\Gb$ is an increasing submodular setfunction on the Borel subsets of $E$, and $\rho_\Gb\le\eta_\Gb$. It is not difficult to verify that this definition generalizes the rank function of the cycle matroid of a finite graph, normalized by the number of vertices. In particular, if $\Gb$ a forest with finite components, then $\rho_\Gb(E)$ is half of the average degree. If $\Gb$ is any graphing with infinite components, then $\rho_\Gb(E)=1$. 

This raises several questions about cycle matroids of graphings. How to define and describe their independent sets, bases, and other basic properties known from the finite case? Does the convergence of a sequence of finite graphs to a graphing imply convergence of the corresponding matroids (in the sense introduced in~\cite{berczi2024quotient})? Does duality of matroids play any role?  In this paper, we provide some answers to these questions. We develop the limit theory of cycle matroids of bounded-degree graphs, show that cycle matroids of graphings are their limit objects in the sense of~\cite{berczi2024quotient}, we study their bases, and investigate how they behave under duality of planar graphings. 

\paragraph{Limit theory.}
Our first result connects local-global convergence of graphs with quotient-convergence of the rank function of cycle matroids, defined in~\cite{berczi2024quotient}. A \emph{$k$-quotient} of a setfunction $\varphi$ on a set-algebra $(J, \mathcal{B})$ is $\varphi \circ F^{-1}$, where $F: J \to [k]$ is a measurable map. We denote by $Q_k(\varphi)$ the set of $k$-quotients of $\varphi$. A sequence of setfunctions $(\varphi_1, \varphi_2, \dots)$ \emph{quotient converges} to $\varphi$, denoted $\varphi_n \rightarrowtail \varphi$, if $Q_k(\varphi_n) \to Q_k(\varphi)$ as $n\to\infty$ in the Hausdorff distance for all $k \in \mathbb{N}$. Note that $Q_k(\varphi_n)$ and $Q_k(\varphi)$ are subsets of the same $2^k$-dimensional linear space, so convergence in Hausdorff distance does not depend on the norm of this space. We refer to Subsection~\ref{subsec:quotient_convergence} for more details on this convergence notion. Our first main theorem concerns the case of cycle matroids of bounded-degree graphs.

\begin{thm} \label{thm:intro_global-conv}
Let $(G_1,G_2,\dots)$ be a sequence of finite graphs with all degrees bounded by $D$ that converges to the graphing $\Gb$ in the local-global sense. Let $r_{G_n}$ denote the rank function of the cycle matroid of $G_n$. Then
\[
\frac{r_{G_n}}{|V(G_n)|} \rightarrowtail \rho_{\Gb}
\]
in the sense of quotient-convergence of submodular setfunctions. 
\end{thm}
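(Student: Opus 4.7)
\emph{Proof plan.} The strategy rests on the identity
\[
\frac{r_G(F)}{|V(G)|} \;=\; 1 - \mathbb{E}_x\!\left[\frac{1}{|V(G^F_x)|}\right], \qquad F \subseteq E(G),
\]
obtained by grouping vertices into their components, which is structurally identical to the defining formula $\rho_\cG(F) = 1 - \mathbb{E}_x[1/|V(\cG^F_x)|]$. Thus both $r_{G_n}/|V(G_n)|$ and $\rho_\cG$ are the same functional of a rooted, edge-colored bounded-degree graph: one minus the expected reciprocal component size of the root in the subgraph spanned by a chosen edge set. Fix $k \in \N$. A $k$-quotient is determined by a (Borel) map $\chi: E \to [k]$, with $S$-th coordinate the rank value on $\chi^{-1}(S)$. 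The task is to show Hausdorff convergence in $\R^{2^k}$ of the two sets of such vectors.

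The key reduction is to truncate the reciprocal-component functional. For $K \ge 1$,
\[
\mathbb{E}_x\!\left[\frac{1}{|V(\cG^{F}_x)|}\right] \;=\; \mathbb{E}_x\!\left[\frac{\mathbf{1}[|V(\cG^F_x)| \le K]}{|V(\cG^F_x)|}\right] + R, \qquad 0 \le R \le 1/K,
\]
and the main term depends only on the induced edge-colored $K$-ball around the root $x$, because any component of size at most $K$ is entirely contained in this ball. The same decomposition holds for each $G_n$. Hence, up to an additive $1/K$, every coordinate of every $k$-quotient is a continuous function of the distribution of rooted, edge-colored $K$-neighborhoods of the host graph.

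The final ingredient is the very content of local-global convergence: for every $k$ and every $K$, the collection of distributions of rooted, edge-colored $K$-neighborhoods arising from $k$-edge-colorings of $G_n$ converges in Hausdorff distance to the corresponding collection for $\cG$. Composing with the continuous truncated-rank computation above, every $k$-quotient of $\rho_\cG$ is, for sufficiently large $n$, approximated within $O(1/K)$ by a $k$-quotient of $r_{G_n}/|V(G_n)|$, and vice versa, uniformly over $S \subseteq [k]$. Taking $n \to \infty$ followed by $K \to \infty$ yields $Q_k(r_{G_n}/|V(G_n)|) \to Q_k(\rho_\cG)$ in Hausdorff distance, as required.

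The main obstacle is that the standard Hatami--Lovász--Szegedy formulation of local-global convergence is phrased for vertex-colorings, whereas here I need an edge-coloring version. I would bridge this either by subdividing each edge of $\cG$ (and of every $G_n$) so that midpoints become vertices carrying the edge color, and checking that this transformation commutes with local-global convergence, or by restating the local-global definition directly for edge-colored graphings and verifying that the neighborhood-sampling framework goes through unchanged. Care will also be needed with Borel measurability of $\chi$ on $\cG$, with showing that every Borel $\chi$ can be approximated by colorings that factor through finite-radius neighborhoods, and with ensuring the approximation is uniform across the $2^k$ coordinates $S \subseteq [k]$.
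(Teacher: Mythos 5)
Your plan is sound and rests on the same underlying mechanism as the paper's proof: the normalized rank is determined, up to a small error, by local statistics, and local-global convergence transfers Hausdorff convergence of colored neighborhood statistics into Hausdorff convergence of quotient sets. The difference is one of packaging. The paper treats the fact that $G \mapsto r_G(E(G))/|V(G)|$ is continuous under Benjamini--Schramm convergence as a cited black box (Theorem~\ref{thm:lovasz}, from \cite{lovasz2023matroid}), and then proves a general boosting theorem (Theorem~\ref{thm:local_to_local-global_boost}) stating that \emph{any} graphing parameter continuous under local convergence gives rise, via restriction to Borel edge sets, to a setfunction that quotient-converges whenever the host graphs converge locally-globally; that theorem is proved by reducing each coordinate of a $k$-quotient to an application of the local-continuity hypothesis to the subgraphing spanned by a union of color classes (Claim~\ref{cl:coloured_conv}). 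Your proposal instead re-derives the local estimability explicitly via the $K$-truncation of $\mathbb{E}_x\left[1/|V(\cG^F_x)|\right]$, and then combines that $O(1/K)$ estimate directly with the Hausdorff convergence of edge-colored $K$-neighborhood statistics, folding the two steps into one argument. Your version is more elementary and quantitative (it gives explicit error control without appealing to the earlier continuity result); the paper's version is more modular and yields a reusable theorem applicable to any locally estimable parameter. Both routes need to bridge the gap between vertex-colorings (in the usual local-global definition) and edge-colorings, which the paper handles by citing \cite[Proposition 19.15]{lovasz2012large}; your subdivision idea accomplishes the same. One minor note: your final worry about approximating a Borel coloring $\chi$ by colorings that factor through finite-radius neighborhoods is unnecessary --- the argument only requires that the truncated rank be a (uniformly) continuous function of the distribution of colored $K$-balls, which it is, not that $\chi$ itself be locally determined.
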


We also exhibit an example showing that assuming local convergence of $G_n$ is not enough: there exists a sequence $(H_1,H_2,\dots)$ of finite graphs with all degrees bounded by $D$ that converges to a graphing $\Gb$ in the local sense, however, $\frac{r_{H_n}}{|V(G_n)|}\centernot\rightarrowtail\rho_{\Gb}$; see Example~\ref{ex:loc} for details.

\paragraph{Independent sets.}
Our goal is to define independent sets of the cycle matroid of a graphing. There are various ways to characterize independent sets in the cycle matroid of a finite graph, of which we consider the following four. 
\begin{itemize}\itemsep0em
    \item Independent sets in the cycle matroid of a finite graph are edge-sets of acyclic subgraphs (briefly, forests).
    \item Bases of the cycle matroid of a finite graph are spanning forests, i.e. they are spanning trees in every connected component. The independent sets are their subsets.
    \item Independent sets in any matroid can be recovered from the rank function as sets with $r(X)=|X|$.
    \item In a finite matroid $(E,r)$, solutions of the system of inequalities $x_e\geq 0$ $(e\in E)$, and $\sum_{e \in A} x(a) \leq r(A)$ $(A \subseteq E)$ are called {\it fractional independent sets}, and form the {\it matroid polytope}. Vertices of the matroid polytope are precisely the (indicator vectors of) independent sets.
\end{itemize}
Before generalizing these definitions for graphings and stating the theorem about a certain equivalence of them, we need to discuss some points where the generalizations are not quite straightforward.

In this section, we consider forests that are hyperfinite. A graphing $\Gb=(J,\mu,E)$ is \emph{hyperfinite} if, for every $\eps > 0$, there is a Borel set $S \subseteq E$ with $\eta_\Gb(S) \leq \eps$ such that every connected component of $E\setminus S$ is finite. A \emph{forest} in $\Gb$ is an acyclic subset $T\subseteq E$ of the edges. An \emph{essential spanning forest} is a forest that forms a spanning tree in every finite component of $\Gb$, and contains only infinite $T$-components in the infinite $\Gb$-components. Note that $T$ might contain several components in an infinite component of $\Gb$, but all of those are required to be infinite; this is why $T$ is called an essential spanning forest instead of a spanning tree. Let us emphasize that we will call a set of edges a ``forest'' or an ``essential spanning forest'' if they became one after adding or deleting a set of edges of measure $0$.

A vertex of a convex polytope can be generalized to the infinite setting in several ways. A point $v$ of the convex set $C$ in a Banach space $X$ is \emph{extreme}, if it cannot be written as $v=\frac12(u+w)$, where $u,w\in C\backslash\{v\}$. The point $v$ is \emph{exposed}, if there is a
continuous linear functional $\Lambda$ on $X$ such that $\Lambda(y)\le\Lambda(x)$ for every $y\in C$, and $x$ is the unique point of $C$ with
equality. An exposed point is necessarily extreme, but not the other way around; see Section~\ref{sec:extreme} for further comments.
 
It was suggested in~\cite{lovasz2023submodular} that for a general increasing submodular setfunction $\fg$ with $\fg(\emptyset)=0$, the generalization of the matroid polytope should be the set of its minorizing charges, i.e.\ finitely additive measures $\alpha$ such that $0\le\alpha\le\fg$. For a graphing $\Gb=(J,\mu,E)$, we are in a simpler situation, since a finitely additive measure $\alpha$ minorizing $\rho_\Gb$ satisfies $\alpha\le\eta_\Gb$, and so it is automatically countably additive (i.e., a proper measure). So the analytic version of the cycle matroid polytope will be 
\[
\mmp(\rho_{\Gb}) = \big\{\alpha \textrm{ measure on $E$}\mid  0\leq \alpha \leq \rho_{\Gb} \big\}.
\] 
The base polytope of a cycle matroid can be generalized as
\[
\bmmp(\rho_\Gb) = \big\{\alpha\in\mmp(\rho_\Gb)\mid  \alpha(E)=\rho_\Gb(E) \big\}.
\]
First, we give a characterization of the exposed points of $\mmp(\rho_\Gb)$ and $\bmmp(\rho_\Gb)$.

\begin{thm}\label{thm:exposed}
Let $\Gb=(J,\mu,E)$ be a graphing. 
\begin{enumerate}[label=(\alph*)]\itemsep0em
    \item $\alpha\in\mmp(\rho_\Gb)$ is exposed if and only if $\alpha=\eta_\Gb|_F$ for a hyperfinite subforest $F\subseteq E$. \label{exposed:a}
    \item $\alpha\in\bmmp(\rho_\Gb)$ is exposed if and only if $\alpha=\eta_\Gb|_F$ for a hyperfinite essential spanning forest $F\subseteq E$. \label{exposed:b}
\end{enumerate}
\end{thm}

We also give characterization of these kind of subsets used in Theorem~\ref{thm:exposed}.

\begin{thm}\label{thm:exposed_char}
Let $\Gb=(J,\mu,E)$ be a graphing and $F\subseteq E$ be a Borel subset of edges. Then the following are equivalent:
    \begin{enumerate}[label=(\roman*)]\itemsep0em
    \item  $(J,\mu,F)$ is a hyperfinite forest; \label{indepi}
    \item $F$ is a Borel subset of the edge set of a hyperfinite essential spanning forest; \label{indepii}
    \item $\rho_\Gb(F)=\eta_\Gb(F)$. \label{indepiii}
\end{enumerate}
Similarly, the following are equivalent:
\begin{enumerate}[label=(\roman*')]\itemsep0em
    \item $(J,\mu,F)\subseteq(J,\mu,E)$ is a hyperfinite essential spanning forest; \label{basei}
    \item  $\rho_\Gb(F)=\eta_\Gb(F)=\rho_\Gb(E)$. \label{baseii}
\end{enumerate}
\end{thm}

Along the lines, we introduce the notion of {\it weakly exposed} points and prove structural properties of those, see Theorem~\ref{thm:weakly} and Corollary~\ref{cor:weakstrong}. In Section~\ref{sec:submod}, we show that hyperfinite forests satisfy an exchange property, which in turn leads to a conceptually new proof of the submodularity of the set function $\rho_\Gb$, different from the (somewhat lengthy) proof in~\cite{lovasz2023matroid}. The following question remains open.

\begin{qu}
Are all extreme points of $\bmmp(\rho_\Gb)$ exposed, and therefore hyperfinite spanning forests? 
\end{qu}

\paragraph{Duality.} Duality is an important concept of matroid theory, and it lines up nicely with duality of finite planar graphs: the dual of the cycle matroid of a planar map (embedded connected planar graph) is the cycle matroid of its planar dual. The question naturally arises: Does an analogous statement hold for planar graphings? 

The notion of a ``planar graphing'' takes some work to set up. In order to talk about pairs of dual planar graphings, we define planar graphings $\Gb=(J,\mu,E)$ in a rather restricted sense. The connected components of a \emph{graphing map} are 2-vertex-connected graphs endowed with a specified planar embedding. We assume that the vertex sets of these planar maps have no accumulation points in the plane, and all faces are finite cycles, whose lengths are uniformly bounded. Then the dual graphing map $\Gb^*=(J^*,\mu^*,E^*)$ is well defined, and it has bounded degrees. For a dual pair of graphing maps, there is a measure preserving Borel bijection $\sigma\colon E\to E^*$ such that the image of the star of a vertex of $\Gb$ is a facial cycle in $\Gb^*$ and vice versa. We also note that the hyperfiniteness of $\Gb$ and of $\Gb^*$ are equivalent~\cite[Example 3.11, Proposition 3.12]{angel2018hyperbolic}. See Section~\ref{sec:duality} for an exact definition, and also~\cite{angel2018hyperbolic} and~\cite{conley2021one} for a more general treatment.

\begin{rem}
The above assumptions are made solely to avoid technical difficulties, not because they are necessary for the results presented here. Our aim in this paper is to showcase how duality works, and these assumptions allow us to do so with minimal distraction. The aforementioned works~\cite{angel2018hyperbolic} and~\cite{conley2021one} demonstrate that, with dedicated treatment, such technical questions can be addressed to prove theorems for planar graphings without these restrictions.
\end{rem}

The dual of the cycle matroid, called the \emph{cocycle} matroid of $\Gb=(J,\mu,E)$, can be defined for any (not necessarily planar) graphing by its rank function 
\[
\rho^*_{\Gb}(A) = \rho_{\Gb}(E\setminus A) + \eta_\Gb  (A) - \rho_{\Gb} (E).
\]  
For a general notion of duality for submodular setfunctions, see~\cite{berczi2024monotonic}. Our first result here shows that in the hyperfinite case duality works similarly as in the finite case.

\begin{thm} \label{thm:hyperfinite_duality}
    Let $\Gb$ and $\Gb^*$ be a hyperfinite pair of dual planar graphings. Then $\rho^*_{\Gb} = \rho_{\Gb^*}$ and $\rho^*_{\Gb^*} = \rho_{\Gb}$.
\end{thm}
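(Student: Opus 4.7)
The strategy is to use hyperfiniteness to reduce the identity $\rho^D_\cG=\rho_{\cG^*}$ to a component-by-component application of the classical planar matroid duality $M(G)^*=M(G^*)$.

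First, I record that $\rho_\cG$ is $1$-Lipschitz with respect to edge measure, i.e.\ $|\rho_\cG(F_1)-\rho_\cG(F_2)|\le\tilde\mu(F_1\triangle F_2)$ for Borel $F_1,F_2\subseteq E(\cG)$. This follows from monotonicity of $\rho_\cG$ together with the basic bound $\rho_\cG(F)\le\tilde\mu(F)$, and the analogous statement holds on $\cG^*$. Consequently, modifying an edge set by one of small measure changes its rank by at most that measure.

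Next, I treat the special case where $\cG$ has a.e.\ finite components. Since infinite faces have been excluded, $\cG^*$ then also has a.e.\ finite components, and the components of $\cG^*$ are exactly the planar duals of the components of $\cG$. Decomposing $\cG$ measurably into its countable family of components and applying finite planar matroid duality $r_C^*=r_{C^*}\circ\sigma$ on each one, an integration against the vertex measure (with the normalization built into the definition of $\rho_\cG$) yields $\rho^D_\cG=\rho_{\cG^*}\circ\sigma$.

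For the general hyperfinite case, fix $\varepsilon>0$ and choose a Borel set $B\subseteq E(\cG)$ satisfying $\tilde\mu(B)<\varepsilon$ and $\tilde\mu^*(\sigma(B))<\varepsilon$ such that $\cG\setminus B$ has a.e.\ finite components; such a $B$ can be assembled by taking the union of a hyperfinite witness for $\cG$ with the $\sigma$-preimage of one for $\cG^*$. Since primal deletion corresponds to dual contraction, $(\cG\setminus B)^*=\cG^*/\sigma(B)$, and applying the previous step to this pair gives $\rho^D_{\cG\setminus B}=\rho_{\cG^*/\sigma(B)}\circ\sigma$. Combining this with the contraction identity $\rho_{\cG^*/\sigma(B)}(X)=\rho_{\cG^*}(X\cup\sigma(B))-\rho_{\cG^*}(\sigma(B))$, the bound $\rho_{\cG^*}(\sigma(B))\le\tilde\mu^*(\sigma(B))<\varepsilon$, and the Lipschitz estimate of the first step, one obtains $|\rho^D_\cG(A)-\rho_{\cG^*}(\sigma(A))|=O(\varepsilon)$ for every Borel $A\subseteq E(\cG)$. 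Letting $\varepsilon\to 0$ gives $\rho^D_\cG=\rho_{\cG^*}$, and the identity $\rho^D_{\cG^*}=\rho_\cG$ then follows from $(\rho^D)^D=\rho$. The principal difficulty is the primal--dual asymmetry: a hyperfinite witness on one side has no a priori reason to look small on the other, since deletion corresponds to contraction under $\sigma$ rather than deletion, and this is exactly why both hyperfiniteness assumptions must be combined simultaneously through the edge bijection.
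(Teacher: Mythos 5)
Your outline is a genuinely different route from both of the paper's proofs, and the overall strategy (Lipschitz continuity of the rank in edge measure, plus reduction to finite components via a hyperfinite witness) is reasonable in spirit, but the key reduction step has an unproven and nontrivial gap. The paper's two proofs avoid this entirely: one identifies the exposed points of $\bmm^+(\rho_{\cG})$ and $\bmm^+(\rho_{\cG^*})$ as essential hyperfinite spanning forests and uses the complementation bijection between them (here hyperfiniteness of \emph{both} sides is used to keep the complement hyperfinite), while the other is a direct Mass Transport computation of the identity~\eqref{eqn:mass_transpont}.

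The gap in your argument is the chain
\[
\rho^D_{\cG\setminus B}(\cdot)\;=\;\rho_{(\cG\setminus B)^*}(\sigma(\cdot))\;\stackrel{?}{=}\;\rho_{\cG^*/\sigma(B)}(\sigma(\cdot))\;\stackrel{?}{=}\;\rho_{\cG^*}(\sigma(\cdot)\cup\sigma(B))-\rho_{\cG^*}(\sigma(B)).
\]
The second and third equalities are invoked as if they were the finite-graph facts $(G-e)^*=G^*/e^*$ and $M(G/B)=M(G)/B$, but neither transfers to graphings without an argument, and in fact \emph{neither holds at the level of the underlying Borel graphs}. First, $(\cG\setminus B)^*$ acquires a new ``outer face'' vertex for each finite component of $\cG\setminus B$, whereas $\cG^*/\sigma(B)$ collapses every $\sigma(B)$-component of $\cG^*$ to a single point; these two vertex sets are not even in bijection (in the $\Z^2$-with-box-boundaries example, $\cG^*/\sigma(B)$ has one measure-zero ``grid'' vertex joining all boxes, while $(\cG\setminus B)^*$ has one new vertex per box). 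Second, a hyperfinite witness $B$ typically has \emph{infinite} $\sigma(B)$-components, so $\cG^*/\sigma(B)$ is not a graphing in the usual sense, and $\rho_{\cG^*}(\cdot\cup\sigma(B))-\rho_{\cG^*}(\sigma(B))$ is the \emph{matroid} contraction formula, not the rank of a contracted graphing. When one works through the rank formula on both sides by Mass Transport, the discrepancies in vertex measure and in component counts do, in concrete examples, cancel — but this cancellation is exactly the content one needs to prove, and it is essentially the bookkeeping performed in the paper's Mass Transport proof. So the approach is not wrong in outline, but the step you state as an identity is the heart of the matter and is left unjustified. A smaller point: a hyperfinite witness for $\cG$ alone already suffices — if $\cG\setminus B$ has a.e.\ finite components, so does $(\cG\setminus B)^*$, since duals of finite planar maps are finite — so the union with a $\sigma$-preimage of a witness for $\cG^*$ is not the reason the two hyperfiniteness assumptions interact; they are in any case equivalent by~\cite{angel2018hyperbolic}.
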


However, when $\Gb$ and $\Gb^*$ are not hyperfinite, the two matroids do not form a dual pair. This is the case, for example, if the components of $\Gb$ and $\Gb^*$ are dual hyperbolic tilings. This example leads us to an interesting area, motivated by group theory, namely the theory of ``cost''. The \emph{cost} of a graphing $\cost(\Gb)$ is a nonnegative number, designed to measure the minimal amount of edges needed to generate the connected components of a graphing; see the precise definition in Subsection~\ref{subsec:prelim_graphing}. Every hyperfinite graphing has cost at most $1$, and exactly $1$ if all components are infinite. On the other hand, if a planar graphing is not hyperfinite, then its cost is strictly more than $1$~\cite{conley2021one}. 

\begin{thm} \label{thm:cost_achieveing_matroid}
    Let $\Gb$ and $\Gb^*$ be a non-hyperfinite pair of dual planar graphings. Then $\rho^*_{\Gb^*} \neq \rho_{\Gb}$. Moreover, $\rho^*_{\Gb^*}$ is a submodular function on $E$ that attains the cost, i.e. $\rho^*_{\Gb^*}(E) = \cost(\Gb)$.  
\end{thm}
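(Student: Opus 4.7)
The plan is to combine the explicit formula for the cocyclic rank at the full ground set with the planar spanning-forest duality developed in \cite{conley2021one}.

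First, I would reduce $\rho^D_{\cG^*}(E)$ to an edge-measure expression. Since the hyperfiniteness of $\cG$ and $\cG^*$ coincide, $\cG^*$ is also non-hyperfinite, so a.e.\ component of $\cG^*$ is infinite and \eqref{eqn:graphing_rank} gives $\rho_{\cG^*}(E)=1$. Substituting $A=E$ into the definition of the cocyclic matroid yields
\[\rho^D_{\cG^*}(E)\;=\;\rho_{\cG^*}(\emptyset)+\tilde\mu_{\cG^*}(E)-\rho_{\cG^*}(E)\;=\;\tilde\mu_{\cG^*}(E)-1,\]
and the same reasoning gives $\rho_\cG(E)=1$.

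The heart of the proof is the identity $\cost(\cG)=\tilde\mu_{\cG^*}(E)-1$, which I would prove by matching bounds. For the upper bound, take $T^*\subseteq E(\cG^*)$ to be the free uniform spanning forest of $\cG^*$: by \cite{conley2021one}, in the non-hyperfinite planar setting $T^*$ is a.s.\ connected on each component of $\cG^*$ and has edge measure $\tilde\mu_{\cG^*}(T^*)=1$. Planar duality then identifies $S:=\sigma^{-1}(E(\cG^*)\setminus T^*)\subseteq E(\cG)$ with the wired uniform spanning forest of $\cG$, and the dual spanning-tree structure shows that $S$ generates the connected-component equivalence relation of $\cG$; tracking the edge measure across $\sigma$ gives $\tilde\mu_\cG(S)=\tilde\mu_{\cG^*}(E)-1$, hence $\cost(\cG)\le\tilde\mu_{\cG^*}(E)-1$. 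Conversely, any generating subgraphing $\cH\subseteq\cG$ must have $\sigma(E\setminus\cH)$ acyclic in $\cG^*$: a cycle there would dualize to a cut inside $E\setminus\cH$, disconnecting a component of $\cG$ and contradicting the generating property. So $\sigma(E\setminus\cH)$ is a forest of edge measure at most $1$, translating into $\tilde\mu_\cG(\cH)\ge\tilde\mu_{\cG^*}(E)-1$, and infimizing yields the desired lower bound.

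For the separation $\rho^D_{\cG^*}\neq\rho_\cG$, there are two regimes. Either $\cost(\cG)\neq 1$, in which case the two rank functions differ outright at $A=E$; or $\cost(\cG)=1$, in which case I would test the two functions on the wired spanning forest $S$ itself, where $\rho^D_{\cG^*}(S)$ sees $\sigma(E\setminus S)=T^*$ as a full-rank spanning tree of $\cG^*$ while $\rho_\cG(S)$ records that the multi-component (non-hyperfinite) wired spanning forest $S$ fails to span $\cG$, producing a mismatch. The main obstacle I anticipate is the second step, namely a faithful translation of the spanning-forest duality of \cite{conley2021one} into the present cocyclic-matroid formulation and verifying the edge measure, one-endedness, and generating properties in the abstract graphing setting with the normalization conventions for $\tilde\mu$, $\cost$, and $\rho$ used throughout the paper.
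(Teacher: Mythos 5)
Your overall strategy (planar duality of spanning forests plus the Conley--Gaboriau--Marks--Tucker-Drob machinery) is the right one and is the approach the paper takes, but as executed the argument has several errors that break both halves of the bound.

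First, the normalization: substituting $A=E$ into the definition gives $\rho^D_{\cG^*}(E)=\tilde\mu^*(E^*)-\rho_{\cG^*}(E^*)=\tilde\mu(E)-\mu^*(X^*)$, not $\tilde\mu(E)-1$. The paper explicitly warns that $\mu^*(X^*)\ne 1$ in general, and this constant matters throughout your computation.

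Second, and more seriously, you chose the wrong spanning forest. You take $T^*$ to be the \emph{free} spanning forest of $\cG^*$, claim it is connected with edge measure $1$, and set $S=\sigma^{-1}(E^*\setminus T^*)$. But in the non-hyperfinite case the free spanning forest is \emph{not} hyperfinite, so by Levitt's characterization its edge measure is strictly larger than $\mu^*(X^*)$; the claim $\tilde\mu^*(T^*)=1$ fails. Worse, the dual complement $S$ is then the \emph{wired} spanning forest of $\cG$, which has infinitely many infinite components inside each $\cG$-component precisely because $\cG$ is non-hyperfinite; it does \emph{not} generate the connectedness relation of $\cG$, so the inequality $\cost(\cG)\le\tilde\mu(S)$ is unjustified. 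The paper does the opposite: it takes $F^*$ to be a $1$-ended (hence hyperfinite) spanning forest of $\cG^*$ (this is what \cite{conley2021one} provides), which does have edge measure $\mu^*(X^*)$, and its dual complement $E\setminus F$ \emph{is} connected on a.e.\ component of $\cG$, hence a treeing generating $\cG$. Then Gaboriau's theorem (\ref{thm:gaboriau}) already gives $\cost(\cG)=\tilde\mu(E\setminus F)=\tilde\mu(E)-\mu^*(X^*)$ in one stroke; no matching lower bound is needed.

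Third, your lower bound rests on ``$\sigma(E\setminus\cH)$ is a forest, hence has edge measure at most $1$''. This step is false: an acyclic Borel subgraphing of a non-hyperfinite graphing can have edge measure well above the vertex measure (e.g.\ the Bernoulli graphing over the $3$-regular tree is itself an acyclic graphing with edge measure $3/2>1$). To make this route work you would need to show that $\sigma(E\setminus\cH)$ is not just acyclic but hyperfinite (or has components with at most one end), which is a nontrivial planar-duality statement you have not argued; and in any case it becomes unnecessary once you invoke Gaboriau, since any generating treeing attains the cost.

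Finally, your case split for the separation is unnecessary: the paper notes that planar graphings are treeable, so non-hyperfiniteness forces $\cost(\cG)>1=\rho_\cG(E)$, and then $\rho^D_{\cG^*}(E)=\cost(\cG)>\rho_\cG(E)$ already separates the two rank functions; the case $\cost(\cG)=1$ cannot occur.
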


Both submodular functions $\rho_{\Gb}$ and $\rho^*_{\Gb^*}$ are natural to consider, as supported by the following. Let $\Gb$ be a graphing and $<$ be an arbitrary Borel total order on the edges. We define the {\it free maximal spanning forest} $\Tb_f$ of $\Gb$ by simultaneously deleting all edges $e$ of $\Gb$ for which there exists a cycle $C$ containing $e$ in which $e$ is $<$-minimal. The {\it wired maximal spanning forest} $\Tb_w$ is similar to the free version, but we also remove any edge that is minimal with respect to the ordering on a two-way infinite path. It is straightforward to check that both $\Tb_f, \Tb_w$ are acyclic and $\Tb_w\subseteq \Tb_f$. We refer to~\cite[Section 11]{lyons2017probability} for a thorough introduction. It is also true that $\Tb_w$ is an essential spanning forest of $\Gb$. We show the following.

\begin{thm}\label{thm:free_vs_wired}
    Let $\Gb$ and $\Gb^*$ be a pair of dual planar graphings and $<$ be an arbitrary Borel total order on the edges. The Wired Maximal Spanning Forest $\Tb^<_w$ is an exposed point of $\bmm^+(\rho_{\Gb})$, while the Free Maximal Spanning Forest $\Tb^<_f$ is an exposed point of $\bmm^+(\rho^*_{\Gb^*})$.
\end{thm}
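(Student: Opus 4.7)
I would prove the two parts separately, with the second deduced from the first by planar/matroid duality.

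\textbf{For the wired forest.} The plan is to apply the earlier theorem characterizing exposed points of $C_{\cG}$, so it suffices to show that $\cT^<_w$ is a hyperfinite essential spanning forest of $\cG$. The essential-spanning-forest property is immediate from the wired greedy construction: in every finite component the wired and free algorithms coincide and output a spanning tree, while in every infinite component removing the $<$-maximum edge of each finite cycle and of each ``cycle at infinity'' leaves an acyclic subgraph in which every connected piece is infinite. For hyperfiniteness I would invoke the classical Benjamini-Lyons-Peres-Schramm result that the components of the WMSF on a bounded-degree planar graphing are almost surely one-ended, and then note that a bounded-degree one-ended forest-graphing is hyperfinite: cutting along the single infinite ray of each component gives a measurable exhaustion by finite pieces whose boundary has vanishing size.

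\textbf{For the free forest.} The plan is to transport everything through $\sigma:E(\cG)\to E(\cG^*)$, apply the first half to $\cG^*$, and then invoke matroid duality. Let $<^*$ denote the reverse of the order on $E(\cG^*)$ induced by $<$ through $\sigma$. The classical planar-duality identity between free and wired maximal spanning forests yields
\[
\sigma(\cT^<_f) \;=\; E(\cG^*)\setminus \cT^{<^*}_w,
\]
since removing the $<$-maximum edge of a finite cycle in $\cG$ corresponds, via $\sigma$, to adding the $<^*$-minimum edge of the dual facial cycle to the wired spanning forest of $\cG^*$. By the first half applied to $\cG^*$ with the order $<^*$, the set $\cT^{<^*}_w$ is an exposed point of $\bmm^+(\rho_{\cG^*})$. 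A short check using submodularity of $\rho_{\cG^*}$ and the identity $\rho^D_{\cG^*}(E)=\tilde\mu(E)-\rho_{\cG^*}(E)$ shows that the affine involution $\alpha\mapsto \tilde\mu-\alpha$ is a bijection between $\bmm^+(\rho_{\cG^*})$ and $\bmm^+(\rho^D_{\cG^*})$, hence preserves exposed points. Applying this involution to $\cT^{<^*}_w$ and pulling back through $\sigma$ exhibits $\cT^<_f$ as an exposed point of $\bmm^+(\rho^D_{\cG^*})$, as required.

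\textbf{Main obstacle.} The delicate step is the planar-duality identity between wired and free maximal spanning forests in the measurable graphing setting. Componentwise this reduces to the classical correspondence between two greedy algorithms, but here one must check that the measurable selection of $<$-maximum edges commutes with $\sigma$ not only on finite cycles but also on cycles at infinity, and that the identification is compatible with the edge measure on both sides. The bounded-face and $2$-vertex-connectivity assumptions, together with the measurable framework of \cite{angel2018hyperbolic} and \cite{conley2021one}, provide the infrastructure, but carrying the classical correspondence across $\sigma$ without slipping on null sets is where the real technical work lies.
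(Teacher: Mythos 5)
Your overall route matches the paper's: deduce the wired case from the characterization of exposed points of $\bmm^+(\rho_\cG)$, and then transfer to the free case via planar duality of spanning forests together with the involution $\alpha\mapsto\tilde\mu-\alpha$ between $\bmm^+(\rho_{\cG^*})$ and $\bmm^+(\rho^D_{\cG^*})$. Your duality identity (reversing the order and taking the complement of the wired maximal spanning forest of $\cG^*$) is an equivalent restatement of the paper's phrasing (same order, complement of the wired \emph{minimal} spanning forest of $\cG^*$), so that part is fine.

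The gap is in how you establish hyperfiniteness of $\cT^<_w$. You invoke a ``classical Benjamini--Lyons--Peres--Schramm result that the components of the WMSF \ldots are almost surely one-ended.'' First, one-endedness is simply false in general: already for a $2$-regular graphing whose components are bi-infinite lines, the WMSF is the whole edge set and every component is two-ended. Second, and more seriously, the results of this type in the literature (Lyons--Peres--Schramm on minimal spanning forests) are proved for the ordering coming from iid $[0,1]$ edge labels and for specific graph classes; the theorem you are proving concerns an \emph{arbitrary} Borel total order, which is exactly the generalization the paper's Remark~\ref{rem:WmaxSF} flags. The paper avoids any such citation: in the proof of Theorem~\ref{thm:exposed_integer}, the token/mass-transport argument shows directly that the Kruskal-type greedy forest $F=\cT^<_w$ satisfies $\tilde\mu(F)=\rho_\cG(E)$, whence hyperfiniteness follows from Proposition~\ref{prop:levitt} (Levitt), and then $\alpha_F\in\bmm^+(\rho_\cG)$ by Lemma~\ref{lem:hyperfinite_is_minorized} and exposedness by Remark~\ref{rem:equality}'s companion observation (take $g=\vmathbb{1}_F$). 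That argument is self-contained and works for any Borel order; you should replace the appeal to one-endedness with it. The ``main obstacle'' paragraph correctly identifies that carrying the duality across $\sigma$ measurably needs care, but the real missing ingredient in your write-up is the hyperfiniteness step, not the duality bookkeeping.
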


\paragraph{Matroids beyond the hyperfinite.}
While the definition of the cycle matroid via (\ref{eqn:graphing_rank}) is a consistent generalization of the finite notion, it does have a drawback: the rank of the full edge set is at most $1$. This is natural for finite graphs, as spanning trees have average degree slightly below 2. For graphings, however, spanning trees can have higher average degree, so if there is a notion of a matroid whose bases are spanning trees instead of essential spanning trees, then the total rank should be larger than $1$. Theorems \ref{thm:cost_achieveing_matroid} and \ref{thm:free_vs_wired} above show that for planar graphings, the dual matroid of the dual graphing provides such a submodular rank function. 

To further justify the study of $\rho_{\Gb}$, we highlight that even though $\rho_{\Gb}$ only sees spanning forests that are hyperfinite, it still carries a lot of information about the graphing. In the finite case, Whitney proved that every 3-connected graph is uniquely determined by its cycle matroid. In an upcoming work~\cite{uniqueness}, we study analogous problems for graphings.

\begin{qu} \label{qu:whitney}
Under what assumptions does $\rho_\Gb$ determine the graphing $\Gb$? 
\end{qu}

As a first step in this direction, we could show that $\rho_{\Gb}$ already determines acyclic graphings with minimum degree 3 up to \emph{isomorphism} in the following sense: For a graphing $\Gb$ and an acyclic graphing $\Tb$ of minimum degree 3, if $\rho_{\Gb} \cong \rho_{\Tb}$ then $\Gb \cong \Tb$. In both cases, $\cong$ means that there exists a measure preserving bijection between the ground spaces, up to measure zero, that maps one structure to the other. 

\section{Preliminaries} 
\label{sec:prelim}

\subsection{Submodular setfunctions and matroids}

A setfunction $\fg\colon\cB\to\R$ on a set-algebra $(J,\cB)$ is \emph{submodular} if $\fg(X)+\fg(Y)\geq\fg(X\cap Y)+\fg(X\cup Y)$ holds for $X,Y\in\cB$. One of the most basic examples of a submodular function is the rank function of a matroid. A \emph{matroid} $M=(E,r)$ is defined by its finite \emph{ground set} $E$ and its \emph{rank function} $r\colon2^E\to\bZ_+$ that satisfies the so-called \emph{rank axioms}: (R1) $r(\emptyset)=0$, (R2) $X\subseteq Y\Rightarrow r(X)\leq r(Y)$, (R3) $r(X)\leq |X|$, and (R4) $r(X)+r(Y)\geq r(X\cap Y)+r(X\cup Y)$. Note that axiom (R4) requires submodularity of the rank function. A set $X\subseteq E$ is called \emph{independent} if $r(X)=|X|$ and \emph{dependent} otherwise. The \emph{circuits} of the matroid are formed by its (inclusionwise) minimal dependent sets, while maximal independent sets are called \emph{bases}. The rank axioms imply that any two bases have the same cardinality. For further details on matroids, the reader is referred to~\cite{oxley2011matroid}.

\subsection{Graphings and hyperfiniteness} 
\label{subsec:prelim_graphing}

We give the basic definitions, but point to~\cite[Chapters 18 and 21]{lovasz2012large} for a thorough introduction. Let $(J, \mu)$ be a standard Borel probability space.
A \emph{graphing} is a graph $\Gb$ with vertex set $V(\Gb) = J$ and Borel edge set $E \subseteq \binom{J}{2}$, in which all degrees are finite, and 
\begin{equation*} \label{eqn:graphing}
\int_{A} \deg(B,x) \diff \mu (x) = \int_{B} \deg(A,x) \diff \mu (x)
\end{equation*}
for all measurable sets $A,B \subseteq J$, where $\deg(S,x)$ is the number of edges from $x \in J$ to $S \subseteq J$. This assumption allows one to meaningfully define the \emph{edge measure} $\eta_\Gb $ on Borel subsets $F\subseteq E$ by setting
\begin{equation*}
\eta_\Gb(F)=\frac{1}{2}\int \deg_F(x)\diff \mu (x),
\end{equation*}
where $\deg_F(x)$ is the degree of $x$ in $F$. The measure $\eta_\Gb $ is then concentrated on $E_\Gb$. It is worth noting that $F'\subseteq F\subseteq E$ implies $\eta_{\Gb^F}(F')=\eta_\Gb(F')$, where $\Gb^F=(J,\mu,F)$ is the graphing obtained by deleting the edges of $\Gb$ in $E\setminus F$.

An acyclic graphing, that is, a graphing in which almost all components are trees, is called a \emph{treeing}. We will use the following characterizations of a treeing being hyperfinite.

\begin{lem}\label{lem:levitt}
Let $\Gb=(J,\mu,E)$ be a graphing. Then the following are equivalent:
\begin{enumerate}[label=(\roman*)]\itemsep0em
\item $\Gb$ is a hyperfinite treeing; \label{levitti}
\item almost every component of $\Gb$ is a tree with at most $2$ ends; \label{levittii}
\item $\eta_\Gb (E)=\rho_{\Gb}(E)$. \label{levittiii}
\item $\eta_\Gb=\rho_\Gb$ as setfunctions; \label{levittiv}
\end{enumerate}
\end{lem}
\begin{proof}\mbox{}\\
$\ref{levitti}\Leftrightarrow\ref{levittii}$: The equivalence was proved by Levitt~\cite{levitt1995cost} and Adams \cite{adams1990trees}; a more recent proof can be found in~\cite{aldous-lyons2007}.\\
$\ref{levitti}\Rightarrow\ref{levittiii}$: The implication was proved in \cite{lovasz2023submodular}.\\
$\ref{levittiii}\Rightarrow\ref{levittiv}$: It was proved in \cite{lovasz2023submodular} that $\rho_\Gb\le \eta_\Gb$. Now, using the subadditivity of $\rho_\Gb$, we get 
\begin{equation*}
\rho_\Gb(F)+\rho_\Gb(E\setminus F)\ge \rho_\Gb(E)=\eta_\Gb(E)=\eta_\Gb(F)+\eta_\Gb(E\setminus F)\ge \rho_\Gb(F)+\rho_\Gb(E\setminus F).
\end{equation*}
Therefore equality holds throughout, implying $\rho_\Gb(B)=\eta_\Gb(B)$ for any Borel set $F\subseteq E$.\\
$\ref{levittiv}\Rightarrow\ref{levitti}$: 
First we prove that $\Gb$ is a treeing, and then we verify hyperfiniteness. 

Assume that $\Gb$ is not a treeing. Then there exists some positive integer $k$ such that the set of vertices that lie on cycles of length $k$ has positive measure. Since each vertex has bounded degree in $\Gb$, every vertex is contained in bounded many $k$-cycles. Hence there exists $C\subseteq E_\Gb$ such that $C$ is a vertex-disjoint union of $k$-cycles, and $\eta_\Gb(C)>0$. Let $C'\subseteq C$ be Borel such that $C'$ contains exactly one edge in each $k$-cycle. Then
\begin{equation*}
\eta_\Gb(E\setminus C')=\rho_\Gb(E\setminus C')=\rho_\Gb(E)=\eta_\Gb(E)>\eta_\Gb(E\setminus C')=\eta_\Gb(E\setminus C'),     
\end{equation*}
a contradiction.

Now we show that $G$ is hyperfinite. Let $U=\{v\in J\mid |V(\Gb_x)|=\infty\}$ and let $F\subseteq E$ denote the set of edges going between vertices in $U$. Then $\eta_\Gb(F)=\mu(U)$. A graphing is hyperfinite if and only if its restriction to the infinite components is hyperfinite; 
let us denote the restriction by $\Gb|_U$. Although $\Gb|_U$ is not necessarily a graphing as the underlying measure might not be probability measure, one can scale it back to $1$. Here the measure of the edges is $\eta_\Gb(F)=\rho_\Gb(F)=\int_U 1d\mu(x)=\mu(U)$, thus $\Gb|_U$ is hyperfinite, see \cite[Corollary 8.10]{aldous-lyons2007}.
\end{proof}

The next corollary follows from Lemma~\ref{lem:levitt}.

\begin{cor}\label{cor:levitt}
    Let $\Gb(J,\mu,E)$ be a graphing and $F\subseteq E$ be a Borel subset of edges. Then the following are equivalent:
    \begin{enumerate}[label=(\roman*)]\itemsep0em
        \item $F$ is a hyperfinite essential spanning forest;
        \item $\rho_\Gb(E)=\rho_\Gb(F)=\eta_\Gb(F)$.
    \end{enumerate}
\end{cor}

Lemma~\ref{lem:levitt} is also related to the theory of cost, which we introduce very briefly; see~\cite{gaboriau2023around} and~\cite{kechris2004topics} for more details. For two graphings $\Gb=(J,\mu,E_\Gb)$ and $\Hb=(J,\mu,E_\Hb)$ on the same standard Borel probability space $(J,\mu)$, we write $\Gb \sim \Hb$ if they have the same vertex sets of connected components. The \emph{cost} of $\Gb$ is defined as
 \[
\cost(\Gb) =  \inf \big\{ \eta _{\Hb} ( E_\Hb ) \ \big| \  \Hb \sim \Gb\big\}.
\]

For a graphing $\Gb=(J,\mu,E)$ with all components of size $n$ the cost is $1-\frac{1}{n}$, and for hyperfinite graphings with infinite components the cost is 1. In general, for any Borel subset $F\subseteq E$, we have $\rho_\Gb(F)\leq\cost(\Gb^F)\leq\eta_\Gb(F)$. Levitt~\cite{levitt1995cost} characterized those cases when the first inequality holds with equality.

\begin{thm}[Levitt]\label{thm:levitt}
    Let $\Gb=(J,\mu,E)$ be a graphing and $F\subseteq E$ be a Borel subset of edges. Then $\cost(\Gb^F)$ is attained and $\rho_\Gb(F)=\cost (\Gb^F)$  if and only if $\Gb^F$ is hyperfinite.
\end{thm}

It follows that $\rho_\Gb(F)$ and $\cost(\Gb^F)$, as functions in $F$, might be different. This also follows from the simple observation that while the rank is always submodular, the cost is not necessarily so: for an example, consider the Bernoulli graphing on $T_3\times \mathbb{Z}\times \mathbb{Z}$. Gaboriau~\cite{gaboriau2000cout} proved that one cannot ``cut costs'' on treeings, that is, their edge measure is the same as their cost. This property in fact characterizes treeings.

\begin{thm}[Gaboriau]\label{thm:gaboriau}
    Let $\Gb=(J,\mu,E)$ be a graphing and $F\subseteq E$ be a Borel subset of edges. Then $\Gb^F$ is a treeing if and only if $\cost(\Gb^F) = \eta_{\Gb}(F)$.
\end{thm}

For a graph $G$, we refer to its vertex and edge sets as $V(G)$ and $E(G)$, respectively, and the same notation is used for graphings.

\subsection{Local and local-global convergence}

Intuitively, a sequence $(G_1,G_2,\dots)$ of finite graphs is said to be \emph{locally} (or \emph{Benjamini-Schramm}) \emph{convergent} if the local behaviour around a random vertex of $G_n$ converges as $n \to \infty$. Formally, let $r \in \N$ denote our radius of sight, and $\alpha$ denote a rooted, connected graph of radius $r$. The \emph{local statistics} $P_r(G,\alpha)$ of a finite graph $G$ are the probabilities $\mathbb{P}_v\big[ B_{G}(v,r) \cong \alpha\big]$,  where the vertex $v \in V(G)$ is chosen uniformly at random, $B_{G}(v,r)$ denotes the $r$-ball around $v$ in $G$, and the isomorphism has to respect the root. One can define $P_r(G,\alpha)$ analogously for a graphing $\Gb$, with the random vertex being chosen according to $\mu$. A sequence $(G_1,G_2,\dots)$ of finite graphs converges locally to a graphing $\Gb$ if the local statistics converge, i.e.\ $P_r(G_n,\alpha) \to P_r(\Gb,\alpha)$ for all $r$ and $\alpha$. 

Local-global convergence is a more refined notion of convergence, see~\cite{hatami2014limits} for an introduction. We use a colored version of the neighborhood statistics. For a fixed vertex-coloring $c\colon V(G) \to [k]$, the \emph{colored neighborhood statistics} $P_{r}(G,c,\alpha)$ is obtained as before: we choose a vertex $v \in V(G)$ uniformly at random, and then consider its colored $r$-neighborhood $\big( B_G(v,r), c \vert_{B_G(r,v)}\big)$. Then the probabilities $P_{r}[G,c]=\big(P_{r}(G,c,\alpha)\big)_{\alpha}$ form a distribution on the set of rooted, colored, connected graphs $\alpha$ of radius at most $r$. Let $Q_{k,r}(G)$ denote the (finite) set of possible colored neighborhood statistics we can obtain by different colorings:

\[Q_{k,r}(G) = \big\{P_{r}[G,c] ~\big|~ c\colon V(G) \to [k]  \big\}.\]

The set $Q_{k,r}(\Gb)$ is defined analogously for a graphing, with the assumption that the coloring $c\colon V(\Gb) \to [k]$ is required to be Borel. The sequence $G_n$ locally-globally converges to $\Gb$, if $Q_{k,r}(G_n) \to Q_{k,r}(\Gb)$ in the Hausdorff distance. The local-global convergence of $G_n$ means that for any $r,k \in \mathbb{N}$ and $\varepsilon >0$, there is a threshold such that, for $i$ large enough, the colored neighborhood distribution $P_{r}[G_i,c]$ for any $k$-coloring $c$ can be approximately modeled on $\Gb$. That is, we can find some Borel coloring $c'$ of $\Gb$ such that $P_{r}[G_i,c]$ and $P_{r}[\Gb,c']$ are $\varepsilon$-close. Vice versa, Borel colorings of $\Gb$ can be approximately modeled on $G_i$.

\subsection{Quotient-convergence of setfunctions} \label{subsec:quotient_convergence}

The quotient-convergence of setfunctions was introduced in~\cite{berczi2024quotient}. Let $\fg\colon\cB\to\R$ be a setfunction on a set-algebra $(J,\cB)$ with $\fg(\emptyset)=0$. The {\it $k$-quotient set} $Q_k(\fg)$ is then defined as the set of all quotients of $\fg$ on $[k]$, that is, the set of all setfunctions $\fg\circ F^{-1}$ where $F\colon J\to[k]$ is a measurable map. Note that $Q_k(\fg)$ is a subset of $\R^{2^k}$, with the two coordinates $\fg\circ F^{-1}(\emptyset)=0$ and $\fg\circ F^{-1}(J)=\fg(J)$ being fixed. The \emph{quotient profile} of $\fg$ is defined as the sequence $(Q_1(\fg),Q_2(\fg),\dots)$, while the sequence of closures $(\overline{Q}_1(\fg),\overline{Q}_2(\fg),\dots)$ is the {\it closed quotient profile} of $\fg$. A sequence $(\varphi_1,\varphi_2,\dots)$ of setfunctions on set-algebras $(J_1,\cB_1),(J_2,\cB_2),\dots$ is {\it quotient-convergent} if for every positive integer $k$, the quotient sets $(Q_k(\fg_1),Q_k(\fg_2),\dots)$ form a Cauchy sequence in the Hausdorff distance $\dH{k}$ on the $2^k$-dimensional Euclidean space. The sequence $(\fg_1,\fg_2,\dots)$ {\it quotient-converges to $\fg$} for some setfunction $\fg$ if the sequence $(Q_k(\fg_1),Q_k(\fg_2),\dots)$ converges to $Q_k(\fg)$ in the Hausdorff distance, denoted by $\fg_n \rightarrowtail \fg$. For a finite \emph{measurable partition} $\cP = \{P_1,\dots, P_q\}$ of $J$, that is, each $P_i$ is measurable, then $\varphi/\cP\colon 2^{[k]} \to \bR$ denotes the map defined by $(\varphi/\cP)(X) = \varphi(\cup_{i \in X} P_i)$ for $X \subseteq [k]$.

\section{Connection to sparse graph limits}
\label{sec:limits}

We first prove Theorem~\ref{thm:intro_global-conv} in Section~\ref{sec:continuity}, showing that if a sequence of finite bounded-degree graphs converges to the graphing in the local-global sense, then the corresponding sequence of rank functions quotient-converges to the rank function of the graphing. Then, in Section~\ref{sec:noncon}, we present an example showing that assuming local convergence of the graph sequence is not enough.

\subsection{Continuity for local-global convergence}
\label{sec:continuity}

The following result appeared in~\cite{lovasz2023matroid}.

\begin{prop}\label{thm:lovasz}
    Let $(G_1,G_2,\dots)$ be a sequence of finite graphs with all degrees bounded by $D$ that converges to a graphing $\Gb$ in the local sense. Then, $r_{G_n}(E(G_n))/|V(G_n)|\to\rho_{\Gb}(E(\Gb))$.
\end{prop}

We can strengthen this result to imply Theorem~\ref{thm:intro_global-conv} by the following observation. Given any graphing parameter $f$, we can turn it into a setfunction $\varphi_f$ on subsets of $E(\Gb)$ by setting
\[
\varphi_{\Gb,f}(F)  = f(\Gb^F).
\]

\begin{thm} \label{thm:local_to_local-global_boost}
Let $(G_1,G_2,\dots)$ be a sequence of finite graphs with all degrees bounded by $D$ that converges to a graphing $\Gb$ in the local-global sense. If $f$ is continuous with respect to local convergence then $\varphi_{\Gb_n,f} \rightarrowtail \varphi_{\Gb,f}$. 
\end{thm}
\begin{proof}
For sake of brevity, let $\varphi_n = \varphi_{\Gb_n,f}$, $\varphi = \varphi_{\Gb,f}$, $Q^n_k=Q_k(\fg_n)$ and $Q_k=Q_k(\varphi)$. We need to prove that for all $k$, $Q^n_k\to Q_k$ in the Hausdorff distance. In other words, we need to show the following:
    \begin{enumerate}[label=(\arabic*)]\itemsep0em
        \item \label{itm:lim_to_seq} if $a\in Q_k$, then there exists $a_n\in Q^n_k$ such that $a_n\to a$,
        \item \label{itm:seq_to_lim} if $a_n\in Q^n_k$ and $a_n\to a$, then $a \in \overline{Q}_k$.
    \end{enumerate}
We prove these using the following technical claim. 

\begin{cl} \label{cl:coloured_conv}
    Let $(\Gb_1,\cP_1), (\Gb_2,\cP_2), \ldots$ be a sequence of edge-coloured graphings with at most $k$ colours that converges as coloured graphings to the coloured graphing $(\Gb,\cP)$ in the local sense. Then, $\varphi_n/\cP_n$ converges to $\varphi/\cP$ in $\R^{2^k}$. 
\end{cl}
\begin{claimproof}
    For $A\subseteq [k]$, let $\tilde{A}_n$ (respectively $\tilde{A}$) denote the set of edges of $\Gb_n$ (respectively $\Gb$) with colour in $A$. Clearly, $\varphi_n(\tilde{A}_n) = (\varphi_n / \cP_n )(A)$, and similarly for $\varphi$, $\cP$, and $\tilde{A}$. We denote by $\cH_{A,n}$ the subgraphing of $\Gb_n$ with edge set $\tilde{A}_n$, and define $\cH_A$ similarly. The coloured local convergence of the sequence  $(\Gb_n,\cP_n)$ implies that $\cH_{A,n}$ locally converges to $\cH_A$ for all $A$. As $f$ is continuous with respect to local convergence, we have 
    \[(\varphi_n/\cP_n)(A) = \varphi_n(\tilde{A}_n) = f(\cH_{A,n}) \to f(\cH_A) = \varphi(\tilde{A})= (\varphi / \cP) (A),\]
    concluding the proof of the claim.
\end{claimproof}

Proof of \ref{itm:lim_to_seq}. As $a\in Q_k$, there is a Borel partition $\cP$ of the edges of $\Gb$ such that $\varphi/\cP=a$. By $\cP$ being a Borel partition and by local-global convergence, we can find some Borel partition $\cP_n$ of the edges of $\Gb_n$ such that the sequence $(\Gb_n, \cP_n)$ locally converges to $(\Gb, \cP)$ as coloured graphings\footnote{Usually one defines local-global convergence using vertex colorings, but edge colorings can also be approximated, see~\cite[Proposition 19.15]{lovasz2012large}.}. By Claim~\ref{cl:coloured_conv}, the corresponding $a_n = \varphi_n / \cP_n$ converges to $a$. 

Proof of \ref{itm:seq_to_lim}. Assume $a_n \to a$. For all $a_n$, there exists a partition $\cP_n$ such that $a_n=\varphi_n/\cP_n$. By possibly taking a subsequence, we may assume that the sequence $(\Gb_n, \cP_n)$ converges locally as coloured graphings. Let $(\Gb^*, \cP^*)$ represent the limit. The graphing $\Gb^*$ a priori has nothing to do with $\Gb$ or the sequence $\Gb_n$ because local-global convergence of the sequence $\Gb_n$ does not imply a partition $\cP$ of the edges of $\Gb$ such that $(\Gb, \cP)$ represents the limit of $(\Gb_n, \cP_n)$. Nevertheless, by completeness, such a coloured graphing $(\Gb^*, \cP^*)$ exists. However, local-global convergence provides arbitrarily good approximations of $(\Gb^*, \cP^*)$ on $\Gb$. That is, there exist Borel-partitions $\cR_n$ of $\Gb$ such that the $(\Gb, \cR_n)$ locally converge to $(\Gb^*, \cP^*)$. By Claim~\ref{cl:coloured_conv} applied to the sequence $(\Gb_n,\cP_n)$, we get that $a=\varphi_{\Gb^*}/\cP^*$. Let $b_n = \varphi / \cR_n$. Then, $b_n \in Q_k$ and applying the claim to $(\Gb, \cR_n)$ gives $b_n \to a$ as well. Thus we get $a \in \overline{Q}_k$ as stated, finishing the proof of the theorem.
\end{proof}

With the help of Theorem~\ref{thm:local_to_local-global_boost}, we are ready to prove Theorem~\ref{thm:intro_global-conv}.

\begin{proof}[Proof of Theorem~\ref{thm:intro_global-conv}.] Let $f(\Gb)=\rho_{\Gb}(E(\Gb))$ be the graphing parameter that assigns to every graphing the total rank of its edge set. Then $\varphi_{\Gb,f}=\rho_{\Gb}$, and Theorems~\ref{thm:lovasz} and \ref{thm:local_to_local-global_boost} together complete the proof. 
\end{proof}

\subsection{Non-continuity for local convergence}
\label{sec:noncon}

It was proved in~\cite[Theorem~4.7]{lovasz2023matroid} that the rank of the whole edge set of a graph is locally estimable, i.e.\ continuous with respect to local convergence~\cite[Theorem 22.3]{lovasz2012large}. We show by an example that local convergence does not guarantee quotient-convergence of the matroids.

\begin{ex}\label{ex:loc}
Let $(G_1,G_2,\dots)$ be a sequence of $3$-regular, $c$-expander graphs with $|V(G_n)|=2n$ and $0<c<1$, such that their girth $g_n$ tends to infinity. Let $H_n=G_n \cup G_n$, that is, $H_n$ is the disjoint union of two copies of $G_n$. Clearly, the interlaced sequence $G_1,H_1,G_2,H_2,\dots$ is locally convergent (it converges to any 3-regular treeing). Let $\rho_{G_n}=r_{G_n}/|V(G_n)|$, and similarly for $H_n$. We claim that $Q_2(\rho_{G_n})$ is far from $Q_2(\rho_{H_m})$ if $n$ and $m$ are large, so the sequence $(\rho_{G_1}, \rho_{H_1}, \rho_{G_2}, \rho_{H_2},\dots)$ is not quotient-convergent.

The partition of $H_n$ into two copies of $G_n$ defines a quotient of the cycle matroid: 
    \[
    a_n \in Q_2(H_n) \subseteq \R^{2^2}, \ a_n(\emptyset)=0, \ a_n(\{1\})=a_n(\{2\})=\frac{2n-1}{4n}, \ a_n(\{1,2\})=\frac{4n-2}{4n}.
    \] 
The limit of $a_n$ is 
    \[a \in \R^{2^2}, \ a(\emptyset)=0, \ a(\{1\})=a(\{2\})=\frac12, \ a(\{1,2\})=1.\]
We show that if $n$ is sufficiently large, then $G_n$ cannot have $2$-quotients that are $\eps$-approximations of $a$, where $\eps=c/32$. This will imply that $d^\Haus(Q_2(\rho_{G_n}), Q_2(\rho_{H_m}))>\eps$. We choose $n$ large enough so that $g_n>32/c$.
    
By the definition of expander sequences, for every $S\subseteq V(G_n)$ with $|S|\le n$, the number of edges connecting $S$ to $V(G_n)\setminus S$ is at least $c\cdot |S|$. Let $E(G_n)=E_1\cup E_2$ be any 2-partition of the edges. For $i\in\{0,1,2,3\}$, we denote by $V_i$ the set of vertices of $G_n$ with degree $i$ in $E_1$ and let $m_i=|V_i|$. Suppose that this partition results in a quotient closer to $a$ than $\eps$. Then, $|\rho_{G_n}(E_i)-\frac12|<\eps$ for $i=1,2$, and so $|r_{G_n}(E_i)-n|<2\eps n$. Let us note that $r_{G_n}(E_1) \leq 2n - m_0$, and the previous two inequalities imply $m_0\le (1+2\eps)n$. The same bound holds for $m_3$.
    
We bound the number of $E_1$-components as follows. The number of singletons is clearly $m_0$. If an $E_1$-component is not a tree, then it has at least $g_n$ vertices, and hence the number of such components is at most $2n/g_n$. Finally, if a component is a non-singleton tree, then it has at least $2$ vertices of degree $1$, so the number of such components is bounded by $m_1/2$. Hence
\[
m_0+\frac{m_1}{2}\ge 2n - r_{G_n}(E_1)-\frac{2n}{g_n}  \ge \Big(1 - 2\eps-\frac2{g_n}\Big) n \ge \Big(1 - \frac{c}{8}\Big) n. 
\]    
Adding the analogous inequality for $E_2$, we get
\begin{equation*}\label{EQ:EXPAND0}
2m_0+m_1+m_2+2m_3 \ge  \Big(2-\frac{c}{4}\Big)n,
\end{equation*}
and using that $m_0+m_1+m_2+m_3=2n$, we obtain $m_1+m_2 \le cn/4$. Without loss of generality, we may assume that $m_0\le m_3$, implying that $m_0\le |V(G_n)|/2$. The neighbours of $V_0$ in $G$ are contained in $V_1\cup V_2$, and hence by the expansion of $G_n$, we have $m_1+m_2\ge c m_0$. So, 
\[
m_0+m_1+m_2+m_3 \le \frac{c+1}{c}(m_1+m_2)+m_3 \le  \frac{c+1}{c}\,\frac{c}{4}\,n+ \Big(1+\frac{c}{16}\Big)n = \Big(1+\frac{5c}{16}\Big)n <2n .
\]
This is a contradiction.
\end{ex}

\section{Exposed minorizing measures and hyperfinite spanning forests}
\label{sec:extreme}

In this section, we focus on the set of minorizing finitely additive measures of the rank function of a graphing. In Section~\ref{sec:rn}, we establish a connection between the Radon-Nikodym derivatives of such measures and hyperfinite essential spanning forests. We characterize the exposed points of minorizing finitely additive measures in Section~\ref{sec:exposed}, and then introduce the notion of weakly exposed points in Section~\ref{sec:weakly}. Based on these observations, we prove Theorems~\ref{thm:exposed} and~\ref{thm:exposed_char} in Section~\ref{sec:mainthms}. Finally, in Section~\ref{sec:submod}, we give a conceptually new proof of the submodularity of the rank function $\rho_\Gb$ of a graphing $\Gb$ that might be of independent combinatorial interest.

\subsection{Measures, functions and subsets}
\label{sec:rn}

Let $\Gb =(J, \mu,E)$ be a graphing and $\rho_\Gb$ be the rank function of its cycle matroid. If $\Gb$ has no finite components, then $\rho_\Gb(E)=1$. Since any $\alpha \in \mmp(\rho_\Gb)$ is a $\sigma$-additive measure, it admits a Radon-Nikodym derivative with respect to $\eta_\Gb $, denoted by
\[f_{\alpha}(x) = \frac{\partial \alpha}{\partial \eta_\Gb } (x).\]
For a function $f\in L^1(\eta_\Gb)$, let $\eta_f$ denote the measure 
\[
\eta_f(A)=\int_A f\diff \eta_\Gb.
\]
By the above, we can identify the convex sets of measures $\mmp(\rhog)$ and $\bmmp(\rho_\Gb)$ with convex sets of measurable functions.  By a slight abuse of notation, we say that a function $f\in L^1(\eta_\Gb)$ is in $\mmp(\rhog)$ or $\bmmp(\rho_\Gb)$ if the corresponding measure $\eta_f$ is in $\mmp(\rhog)$ or $\bmmp(\rho_\Gb)$, respectively. Note that $f_{\alpha}$ takes values in $[0,1]$. 

If $f_{\alpha}$ is $0$-$1$ valued $\eta_\Gb$-almost everywhere, then $\alpha$ is clearly extreme in both $\mmp(\rho_\Gb)$ and $\bmmp(\rhog)$. In such a case, let us consider the corresponding measurable set of edges $E_{\alpha} = \{e \in E_\Gb \mid f_\alpha(e)=1\}$. Our first observation  is that $\Tb=(J,\mu,E_{\alpha})$ is a hyperfinite essential spanning forest.  

\begin{lem}\label{lem:hyperfinite_is_minorized}
    Let $G=(J,\mu,E)$ be a graphing and $\Tb=(J,\mu,F)$ be a spanning subgraphing. The following are equivalent:
    \begin{enumerate}[label=(\roman*)]\itemsep0em
        \item $\Tb$ is a hyperfinite spanning forest; \label{him:i}
        \item the measure $\alpha_F(A) = \eta_\Gb  (A \cap F)$ is in $\mmp(\rho_\Gb)$.\label{him:ii}
    \end{enumerate}
    Similarly, the following are equivalent:
\begin{enumerate}[label=(\roman*')]\itemsep0em
        \item $\Tb$ is a hyperfinite essential spanning forest; \label{him:i'}
        \item the measure $\alpha_F(A) = \eta_\Gb  (A \cap F)$is in $\bmmp(\rho_\Gb)$.\label{him:ii'}
    \end{enumerate}
\end{lem}
\begin{proof}\mbox{}\\
$\ref{him:i}\Rightarrow\ref{him:ii}$: Let $B$ be a Borel subset of $E$. Then $B\cap F$ is clearly a hyperfinite forest, so $\alpha_F(B)=\eta_\Gb(B\cap F)=\rho_{\Gb}(B\cap F)\le\rho_\Gb(B)$, where the first equality is by the definition of $\alpha_F$, the second equality holds by Lemma~\ref{lem:levitt}, and the inequality follows from \cite[Lemma 4.3]{lovasz2023matroid}.\\
$\ref{him:ii}\Rightarrow\ref{him:i}$: From $\alpha_F\le\rho_\Gb\le\eta_\Gb$ it follows that $\eta_\Tb\le \rho_\Gb|_\Tb=\rho_\Tb\le \eta_\Tb$. Therefore, we get $\eta_\Tb=\rho_\Tb$, implying that $\Tb$ is a hyperfinite spanning forest by Lemma~\ref{lem:levitt}.\\
$\ref{him:i'}\Rightarrow\ref{him:ii'}$: By the equivalence of $\ref{him:i}$ and $\ref{him:ii}$, we have $\alpha_F\in \mmp(\rho_\Gb)$. By Corollary~\ref{cor:levitt}, we have $\alpha_F(E)=\eta_\Gb(F)=\rho_\Gb(E)$, showing $\alpha_F\in\bmmp(\rho_\Gb)$.\\
$\ref{him:ii'}\Rightarrow\ref{him:i'}$: By the equivalence of $\ref{him:i}$ and $\ref{him:ii}$, $\Tb$ is a hyperfinite forest, hence $\rhog(F)=\eta_\Gb(F)$. Since $\eta_\Gb(F)=\alpha_F(E)=\rhog(E)$, we get $\rhog(E)=\eta_\Gb(F)$, implying that $\Tb$ is essential by Corollary~\ref{cor:levitt}.    
\end{proof}

\subsection{Exposed points}
\label{sec:exposed}

Let $C$ be a compact, convex subset of a locally convex topological space. We say that $v\in C$ is exposed if there exists $\varphi$ linear, continuous functional such that $v$ is the unique maximizer of $\varphi$ on $C$, that is,  $\varphi(v) > \varphi(u)$  for any $u \in C$, $u \neq v$. In this case, we say that $v$ is {\it exposed} by the function $\varphi$ and call $\varphi$ a {\it witness function} for $v$. It is not difficult to see that exposed points are extreme. In this section we characterize the exposed points of $\mmp(\rhog)$ and $\bmmp(\rhog)$. We prove that all these points are of the form $\alpha_F$ where $F$ is the edge set of some hyperfinite spanning forest, or in other words, all the exposed $f_{\alpha}$ are $0$-$1$ valued.

While being extreme is a purely algebraic property, being exposed my depend on the underlying topology. Therefore, first we need to specify which space to embed $\mmp(\rhog)$ and $\bmmp(\rhog)$ into. A natural choice would be using $L^{p}$ with the weak topology for some $p\in[1,\infty)$, or $L^{\infty}$ with the weak$^*$ topology. As it turns out, the choice of the space does not matter as $\mmp(\rhog)$ and $\bmmp(\rhog)$ are compact and the exposed points are exactly same in all cases. For ease of discussion, we use the space $L^1$ with the weak topology.

\begin{thm} \label{thm:exposed_integer}
    Let $f_\alpha \in \bmmp(\rho_\Gb)$ be exposed by a function $g\in L^\infty(E, \eta_\Gb )$. Then, $f_{\alpha}$ is $0$-$1$ valued $\eta_\Gb $-almost everywhere.
\end{thm}
\begin{proof}
    We treat $g$ as a weight function on the edges, and our goal is to find a maximum weight hyperfinite essential spanning forest $F$ with respect to this weight function. Recall that $\alpha_{F}$ denotes the measure in $\bmmp(\rhog)$ defined as $\alpha_F(A) = \eta_\Gb (A \cap F)$ for all measurable $A \subseteq E$. We show that there exists an $F$ satisfying $\alpha_{F}(\{g \geq t\}) = \rhog (\{g \geq t\})$ for every $t\in\mathbb{R}$. For any $\beta \in \bmmp(\rhog)$, in a layer-cake integration this implies 
    \[\int_{E} \vmathbb{1}_{F} \cdot g \diff \eta_\Gb  = \int_E g \diff \alpha_F  \geq \int_E g \diff \beta = \int_{E} f_{\beta} \cdot g \diff \eta_\Gb .\] 
    By the assumption that $f_{\alpha}$ is the unique maximizer, this proves $f_{\alpha}=\vmathbb{1}_{F}$ in $L^1(E, \eta_\Gb )$.

    To verify the existence of a required $F$, we run a variant of Kruskal's greedy algorithm~\cite{kruskal1956shortest} on $\Gb $, sometimes called Prim's invasion algorithm. As a tie-breaking rule, we fix an arbitrary Borel ordering of $E$ in advance, and whenever there is a tie among the $g$ values of finitely many edges considered, we choose the maximal one according to the Borel ordering. The algorithm constructs Borel edge sets $F_1 \subseteq F_2 \subseteq \dots$, with $F= \bigcup F_n$ being the final output. In the first step, every vertex simultaneously chooses the incident edge with the highest $g$-value to be included in $F_1$. During the $n^{\mathrm{th}}$ step, every finite component of $F_{n-1}$ adds the edge with the highest $g$-value from the set of edges leaving it if such an edge exists, and thus we obtain $F_n$. In case there is no edge leaving a finite component, observe that it is also a component in theoriginal graphing and $F_{n-1}$ is already a spanning tree in it. Also note that infinite components do not add any edges, but it may happen that a finite component adds an edge that connects it to an infinite one. 

    It is not difficult to see that the sets $F_n$ are Borel, cycles cannot be created, and $F$ is essential. 
    We claim that $\eta_\Gb (F) = \rho_{\Gb}(E)$, so $F$ is hyperfinite by Theorem~\ref{lem:levitt}. Indeed, set up a payment function as follows. In the beginning, every vertex receives $1$ token. In a general step, every finite connected component gives $1$ token to the edge it chooses. After the addition of the edges, for every finite connected component there is exactly one edge who received $2$ tokens. This edge gives $1$ token back to the component, thus ensuring that every finite component has a token at the beginning of the next step. Should a finite $F$-component become spanning in a finite $\Gb$-component, and therefore has no use for its token, this token is redistributed equally among the vertices of this component. In the end, vertices in finite $\Gb$-components paid $1-\frac{1}{|\Gb_{x}|}$ token, and vertices in infinite $\Gb$-components paid 1 token. On the other hand, every edge in $F$ received 1 token, hence $\rho_{\Gb}(E)=\eta_\Gb (F)$ by the Mass Transport Principle. As $F$ is hyperfinite, by Lemma~\ref{lem:hyperfinite_is_minorized}, we have $\alpha_F \in \bmmp(\rhog)$. 
    
    Let $A_t = \{g \geq t\}$ and $F_{\geq t} = F \cap A_t$. Observe that $F_t$ is an essential spanning forest of $A_t$. That is, for $\mu$-almost every $x \in J$,
    \begin{enumerate}[label=(\arabic*)]\itemsep0em
        \item \label{item:finite} if the $A_t$-component of $x$ is finite, then $F_{\geq t}$ is a spanning tree inside this component, and
        \item \label{item:infinite} if the $A_t$-component of $x$ is infinite, then its $F_{\geq t}$-component is also infinite -- though it might be smaller.
    \end{enumerate}    
    For \ref{item:finite}, note that the edges leaving the $A_t$-component all have $g$-values strictly less than $t$, while being an $A_t$-component means that it can be connected by edges with $g$-value at least $t$. So the algorithm will connect these points before growing the component any further. For \ref{item:infinite}, the reasoning is analogous. The conclusion, however, is different, This is due to the fact that we cannot guarantee the connectedness of $F_{\geq t}$ inside the $A_t$-component, because the $F_{\geq t}$-components stop growing once they are all infinite. 

    Observations \ref{item:finite} and \ref{item:infinite} imply $\alpha_F(A_t) = \rho_\Gb (A_t)$, completing the proof of the theorem.
\end{proof}

The algorithm also proves the following corollary.

\begin{cor}\label{COR:FOREST-EXTEND}
Let $\Gb$ be a graphing and $F\subseteq E(\Gb)$ be a hyperfinite subforest of $\Gb$. Then $F$ can be extended to a hyperfinite essential spanning forest of $\Gb$.
\end{cor}
\begin{proof}[Proof of Corollary~\ref{COR:FOREST-EXTEND}]
    Take any Borel ordering of the set $E(\Gb)\setminus F$ and run the same invasion algorithm as in the proof of Theorem~\ref{thm:exposed_integer} with the only difference that the starting graph, which was the empty graph in the previous case, is now $F$. 
\end{proof}

The exposed points of $\mmp(\rhog)$ can be characterized in an analogous way.

\begin{thm}\label{thm:exposed_I_integer}
        Let $f_\alpha \in \mmp(\rho_\Gb)$ be exposed by a function $g\in L^\infty(E, \eta_\Gb )$. Then, $f_{\alpha}$ is $0$-$1$ valued $\eta_\Gb $-almost everywhere.
\end{thm}
\begin{proof}
    Run the algorithm from the proof of Theorem~\ref{thm:exposed_integer}. That gives us a hyperfinite essential spanning forest $F$ with the property that for all $t\in\R$, we have $\alpha_F(\{g\ge t\})=\rho_\Gb(\{g\ge t\})$. Now let $F'=\{e\in F\mid  g(e)\ge 0\}$. We claim that $F'$ maximizes the function $g$ on $C$. The proof, again, relies on the cake-layer representation. Take an $f\in \mmp(\rhog)$. We may assume that $f$ is zero on the set $N=\{g<0\}$, otherwise $\varphi_g(f)<\varphi_g(f\one_{N^c})$.
    \begin{gather*}   
    \varphi_g(f)=\int_{N^c}g\cdot f\diff\eta_\Gb=
    \int_{N^c}g\diff\eta_f=
    \int_0^{\infty}\eta_f\left(\{g\ge t\}\right) \diff\lambda(t)\le \\ 
    \int_0^{\infty}\rho_\Gb\left(\{g\ge t\}\right) \diff\lambda(t)=
    \int_0^{\infty}\alpha_{F'}\left(\{g\ge t\}\right) \diff\lambda(t)=\varphi_g(\one_{F'}).
   \end{gather*}
 By the assumption that $f_{\alpha}$ is the unique maximizer, this proves $f_{\alpha}=\vmathbb{1}_{F'}$.
    \end{proof}

Let us make a couple of remarks.

\begin{rem}
    If $f_{\alpha}$ is $0$-$1$ valued, then it is exposed -- the witness for this is simply choosing $g=f_{\alpha}$. In fact, in this case $f_\alpha$ is \emph{strongly exposed}, i.e.\ for any sequence $\beta_n \in \bmmp(\rhog)$ we have $\varphi_{g}(f_{\beta_n}) \to \varphi_g(f_{\alpha}) \Rightarrow ||f_{\beta_n} - f_{\alpha}||_p \to 0$ for all $p \in [1,\infty)$. That is, Theorem~\ref{thm:exposed_integer} completely characterizes exposed points of $\bmmp(\rhog)$, and a similar statement holds for $\mmp(\rhog)$ as well.
\end{rem}

\begin{rem} \label{rem:exposed_are_dense}
    By the Klee--Straszewicz theorem~\cite[Theorem 2.1]{klee1958extremal}, exposed points are dense among extreme points of $\bmmp(\rhog)$ in the weak$^*$ topology of $L^{\infty}(E, \eta_\Gb )$. Moreover, the exposed points determine $\bmmp(\rhog)$ as $\bmmp(\rhog)=\overline{\conv(\exp(\bmmp(\rhog)))}^{w^*}$. Again, a similar statement holds for $\mmp(\rhog)$ as well.
\end{rem}

\subsection{Weakly exposed points}
\label{sec:weakly}

Even in finite dimension, extreme points are not necessarily exposed. To see this, consider the following set in $\bR^2$:
\[S=\big\{(x,y)\mid -1\leq x\leq 1,\ -1\leq y\leq 1\big\}\cup\big\{(x,y)\mid x^2+y^2\leq 1\big\}.\]
Then $(-1,0)$ and $(1,0)$ are clearly extreme points. However, there is a unique supporting hyperplane through any of them: $x=-1$ through $(-1,0)$ and $x=1$ through $(1,0)$, hence these points are not exposed. Nevertheless, these extreme points can be exposed in finitely many steps by iteratively restricting the problem to the intersection of the convex set and a supporting hyperplane. In this subsection, we use this idea to generalize Theorem~\ref{thm:exposed_integer} to extreme points that we call \emph{weakly exposed}.

We say that an extreme point $x \in \bmmp(\rhog)$ is \emph{exposed in two steps} by $L^1$ functions, if one can find $g_1, g_2 \in L^1(E, \eta_\Gb )$ such that $\varphi_{g_1}(x)$ is maximal in $\bmmp(\rhog)$, and $x$ strictly maximizes $\varphi_{g_2}$ in $\{y \in \bmmp(\rhog) \mid \varphi_{g_1}(y) = \varphi_{g_1}(x)\}$. Note that the proof of Theorem~\ref{thm:exposed_integer} yields the same conclusion for points that are exposed in two steps. The only difference is that during the algorithm, we always choose an edge that is maximal with respect to $g_1$, and in case of ties, we choose the one maximal with respect to $g_2$. Finally, if there is still a tie, we decide according to the Borel ordering we fixed at the beginning. 

More generally, let $\alpha\in\omega_1$ be a countable ordinal and let $\mathbf{g}\colon \alpha\to L^1(E, \eta_\Gb )$ be a list of functions. Let us define $\mathbf{g}(x)\in \R^{\alpha}$ by $f(x)_{\beta}=\mathbf{g}(\beta)(x)$. Then, $\mathbf{g}$ defines a Borel partial ordering on $\bmmp(\rhog)$ based on the lexicographical ordering of the images of points of $\bmmp(\rhog)$ in $\R^{\alpha}$. We denote this ordering by $<_{\mathbf{g}}$ and call a point $x\in \bmmp(\rhog)$ \emph{exposed in $\alpha$ steps} by the list $\mathbf{g}$ if $x$ is the unique maxima of $<_{\mathbf{g}}$ in $\bmmp(\rhog)$. In other words, for all $y\in \bmmp(\rhog)$ there exists an ordinal $\beta_y\le \alpha$ such that for all $\gamma\leq \beta_y$, we have $\mathbf{g}(\gamma)(x)=\mathbf{g}(\gamma)(y)$ and  $\mathbf{g}(\beta_y)(x)>\mathbf{g}(\beta_y)(y)$. We say $x\in \bmmp(\rhog)$ is \emph{weakly exposed}, if it is exposed in $\alpha$ steps for some countable ordinal $\alpha$.

\begin{thm}\label{thm:weakly}
    Let $f_\alpha \in \bmmp(\rhog)$ be weakly exposed by a list of functions in $L^1(E, \eta_\Gb )$. Then, $f_{\alpha}$ is $0$-$1$  valued $\eta_\Gb $-almost everywhere.
\end{thm}
\begin{proof}
The proof of Theorem~\ref{thm:exposed_integer} holds for weakly exposed points as well by always choosing the $<_{\mathbf{g}}$-maximal edge.
\end{proof}

\begin{cor}\label{cor:weakstrong}
    Weakly exposed points of $\bmmp(\rhog)$ are strongly exposed.
\end{cor}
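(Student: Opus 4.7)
The plan is to chain the preceding theorem with the earlier remark's claim that $0$-$1$ valued elements of $C$ are strongly exposed. Let $f_\alpha \in C$ be weakly exposed. By the preceding theorem, $f_\alpha = \one_{E_\alpha}$ $\tilde\mu$-a.e.\ for some Borel edge set $E_\alpha$ forming an essential hyperfinite spanning forest, so $\alpha(E) = \tilde\mu(E_\alpha)$, and essentiality of the forest gives $\rho(E_\alpha) = \rho(E)$. It therefore suffices to exhibit an $L^1$-functional that strongly exposes $f_\alpha$; the natural candidate is $g = f_\alpha$ itself, which lies in $L^1(E, \tilde\mu) \cap L^\infty(E, \tilde\mu)$ since $\tilde\mu$ is finite.

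To verify that $g$ exposes $f_\alpha$, I would compute $\varphi_g(f_\alpha) = \tilde\mu(E_\alpha) = \rho(E)$, while for any $\beta \in C$ we have $\varphi_g(f_\beta) = \beta(E_\alpha) \leq \rho(E_\alpha) = \rho(E)$, with equality iff $\beta(E_\alpha) = \rho(E)$. For strong exposure, let $\beta_n \in C$ satisfy $\varphi_g(f_{\beta_n}) \to \varphi_g(f_\alpha)$, i.e.\ $\beta_n(E_\alpha) \to \rho(E)$. Since $\beta_n(E) = \rho(E)$, this forces $\beta_n(E \setminus E_\alpha) \to 0$, and a direct split using $f_{\beta_n}, f_\alpha \in [0,1]$ gives
\[\|f_{\beta_n} - f_\alpha\|_1 = \bigl(\tilde\mu(E_\alpha) - \beta_n(E_\alpha)\bigr) + \beta_n(E \setminus E_\alpha) \to 0.\]
The pointwise bound $|f_{\beta_n} - f_\alpha|^p \leq |f_{\beta_n} - f_\alpha|$, valid for all $p \in [1, \infty)$ because both functions take values in $[0,1]$, then upgrades $L^1$-convergence to $L^p$-convergence for every such $p$.

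There is no substantive obstacle here: all the real work has been done by the preceding theorem, which reduces the statement to the $0$-$1$ valued case that the remark preceding that theorem already flagged as strongly exposed. The only arithmetic to verify is the short mass computation above, where the essentiality of $E_\alpha$ pins down $\rho(E_\alpha) = \rho(E)$ and thereby converts ``$f_\alpha$ maximizes $\varphi_g$'' into genuine $L^p$-norm convergence of any maximizing sequence.
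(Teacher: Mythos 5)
Your proposal is correct and follows the same route the paper intends: chain the theorem (weakly exposed $\Rightarrow$ $0$-$1$ valued) with Remark 4.6 ($0$-$1$ valued $\Rightarrow$ strongly exposed, with witness $g = f_\alpha$), which is exactly why the paper states this as a corollary with no separate proof. The only added value in your write-up is that you supply the short mass computation that the remark leaves implicit, which is fine and correct.
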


In finite dimension, all extreme points of a compact convex set are weakly exposed. In infinite dimension, however, this is not necessarily true. The closed unit ball of $\ell^p(\N)$ for $1<p<2$ in $\ell^2(\N)$ has extreme points that are not even support points, see~\cite[Example 7.5]{simons2007hahn}.

\begin{rem} \label{rem:WmaxSF}
The algorithm in the proof of Theorem~\ref{thm:exposed_integer} results in a Wired Maximal Spanning Forest with respect to an arbitrary Borel ordering $<$ on the edge set, as defined in the introduction. Usually, such forests are studied with the ordering coming from iid $[0,1]$ labels on the edges of a countable graph.    
\end{rem}

\subsection{Characterizations of independent sets and bases}
\label{sec:mainthms}

In this subsection we prove Theorems~\ref{thm:exposed} and~\ref{thm:exposed_char}.

\begin{proof}[Proof of Theorem~\ref{thm:exposed}]
First we prove~\ref{exposed:a}. For the `if' direction, note that $\eta_\Gb|_F\in\mmp(\rho_\Gb)$ by Lemma~\ref{lem:hyperfinite_is_minorized}. The Radon-Nikodym derivative of the measure $\eta_\Gb|_F$ is $\one_F$. The function $g=\one_F-\one_{F^c}$ is a witness function for $\alpha$ being exposed. To see the `only if' direction, using Theorem~\ref{thm:exposed_I_integer}, we know that exposed functions are $0-1$ valued. Therefore, the exposed points are measures of the form $\eta_\Gb|_F$ for some subset $F\subseteq E$. However, the subsets for which this restriction is in $\mmp$ are exactly the hyperfinite subforests by Lemma~\ref{lem:hyperfinite_is_minorized}.

Now we prove~\ref{exposed:b}. For the `if' direction, note that $\eta_\Gb|_F\in\bmmp(\rho_\Gb)$ by Lemma~\ref{lem:hyperfinite_is_minorized}. The Radon-Nikodym derivative of the measure $\eta_\Gb|_F$ is $\one_F$. The function $g=\one_F-\one_{F^c}$ is a witness function for $\alpha$ being exposed. To see the `only if' direction, using Theorem~\ref{thm:exposed_I_integer}, we know the exposed functions are $0-1$ valued. Therefore, the exposed points are measure of the form $\eta_\Gb|_F$ for some subset $F\subseteq E$. However, the subsets for which this restriction is in $\bmmp(\rhog)$ are exactly the hyperfinite essential spanning forests by Lemma~\ref{lem:hyperfinite_is_minorized}.
\end{proof}

\begin{proof}[Proof of Theorem~\ref{thm:exposed_char}]\mbox{}\\
$\ref{indepi}\Leftrightarrow\ref{indepiii}$: The equivalence follows by Lemma~\ref{lem:levitt}.\\ 
$\ref{indepi}\Rightarrow\ref{indepii}$: The implication follows by Corollary~\ref{COR:FOREST-EXTEND}.\\
$\ref{indepii}\Rightarrow\ref{indepi}$: The implication follows from the fact that every Borel subset of a hyperfinite treeing is a hyperfinite treeing.\\
$\ref{basei}\Leftrightarrow\ref{baseii}$ The equivalence follows by Corollary~\ref{cor:levitt}. 
\end{proof}

\subsection{Independent sets and exchange properties}
\label{sec:submod}

The setfunction $\rhog$ was shown to be submodular in~\cite{lovasz2023matroid}. The proof relies on structural analysis of so-called $\mathcal{B}$-partitions and is somewhat complicated. In this subsection we give a new proof of this result that conceptually differs from the original one. The main idea is to define a submodular setfunction through independent sets, and then to show that the function thus obtained is identical to $\rhog$.

Let $\Gb=(J,\mu,E)$ be a graphing. We define $\cI(\Gb)=\{F\subseteq E\mid F\text{ is a hyperfinite forest}\}$ and $\cB(\Gb)=\{F \subseteq E\mid F\text{ is a hyperfinite essential spanning forest}\}$. By Corollary~\ref{COR:FOREST-EXTEND}, $\cI(\Gb)$ is the set of subsets of the elements of $\cB(\Gb)$, in other words, 
$$ 
\cI(\Gb)=\{F\mid  F\subseteq F'\ \text{for some}\ F'\in \cB(\Gb)\}.
$$
By Lemma~\ref{lem:levitt}, the measure of any hyperfinite essential spanning subforest is the same.

Let $(J,\mu)$ be a standard Borel space. We say that family of subsets $\cI$ of $J$ is the {\it independence family of a measurable matroid} with respect to the measure $\mu$, if
\begin{enumerate}[label=(I\arabic*)]\itemsep0em
    \setcounter{enumi}{-1}
    \item $\cI$ is closed under taking subsets, \label{it:i0}
    \item if $I_1\subseteq I_2\subseteq\dots $ with $I_n\in \cI$ for all $n\in\bZ_+$, then $\cup I_n\in\cI$,\label{it:i1}
    \item for all $I_1,I_2\in \cI$ with $\mu(I_1)<\mu(I_2)$, there exists $I_3\in\cI$ such that $I_1\subseteq I_3\subseteq I_1\cup I_2$ and $\mu(I_3)>\mu(I_1)$,\label{it:i2}    
\end{enumerate}
In certain cases, the following reformulation of~\ref{it:i2} is easier to work with.
\begin{enumerate}[label=(I\arabic*')]\itemsep0em
    \setcounter{enumi}{1}
    \item for all $I_1,I_2\in \cI$, there exists $I_3\in\cI$ such that $I_1\subseteq I_3\subseteq I_1\cup I_2$ and $\mu(I_3)\ge\mu(I_2)$.\label{it:i2'}
\end{enumerate}

\begin{lem}\label{lem:equiv}
    $\{\ref{it:i1},\ref{it:i2}\}$ and $\{\ref{it:i1},\ref{it:i2'}\}$ are equivalent.
\end{lem}
\begin{proof}
    It is clear that \ref{it:i2'} implies \ref{it:i2}. It remains to show that \ref{it:i1} and \ref{it:i2} together imply \ref{it:i2'}. We build a transfinite sequence of Borel sets $A_\alpha$ ($\alpha\leq\omega_1$). Let $A_0=I_1$. Using \ref{it:i2}, there exists $A_1\in\cI$ such that $A_0\subseteq A_1\subseteq A_0\cup I_2$. Similarly, there exists $A_2\in\cI$ such that $A_1\subseteq A_2\subseteq A_1\cup I_2=A_0\cup I_2$. In general, let $f\colon\cI\to\cI$ be a function with the following properties: $A\subseteq f(A)\subseteq I_2\cup A$ for all $A\in\cI$, $\mu(A)<\mu(f(A))$ whenever $\mu(A)<\mu(I_2)$, and $A=f(A)$ whenever $\mu(A)=\mu(I_2)$. Note that such a function exists by~\ref{it:i2}. Now do this recursion transfinitely, up to ordinal $\omega_1$.

    If $\mu(A_\alpha)=\mu(I_2)$ for some $A_\alpha$ in the sequence, then we are done. Suppose that $\mu(A_{\alpha})<\mu(I_2)$ for all $A_\alpha$. Since there exists no increasing sequence of real numbers of order $\omega_1$, the sequence $A_{\alpha}$ has to stabilise at a certain point. Call the first ordinal with such an independent set $\alpha_0$; note that $A_{\alpha_0}$ is in $\cI$ by~\ref{it:i1}. That is, $f(A_{\alpha_0})=A_{\alpha_0}$, but $\mu(A_{\alpha_0})<\mu(I_2)$, contradicting the choice of $f$.


\end{proof}

\begin{lem}\label{lem:submod}
    Let $\cI$ be the family of independent subsets of a measurable matroid with respect to measure $\mu$. Then the setfunction $r(A)=\sup\{\mu(I)\mid  I\in\cI,\ I\subseteq A\}$ is submodular.
\end{lem}
\begin{proof}
    We here imitate the proof of \cite[Theorem 5.12]{lovasz2023submodular}.
    Let $A,B$ Borel sets and $\varepsilon>0$. Then there exists $I_1,I_2\in\cI$ such that $I_1\subseteq A\cap B,I_2\subseteq A\cup B$, and $r(A\cap B)-\varepsilon\leq \mu(I_1)$, $r(A\cup B)-\varepsilon\leq \mu(I_2)$. By the exchange property~\ref{it:i2'}, there exists a set $H\in\cI$ such that $I_1\subseteq H\subseteq I_1\cup I_2\subseteq A\cup B$ and $\mu(H)\ge \mu(I_2)\ge\mu(A\cup B)-\varepsilon$. Then $\mu(H\cap (A\cap B))\ge\mu(I_1\cap(A\cap B))=\mu(I_1)\geq r(A\cap B)-\varepsilon$.
    Therefore, using~\ref{it:i0}, we get
    \begin{align*}
    r(A)+r(B)
    {}&{}\ge 
    r(H\cap A)+r(H\cap B)\\
    {}&{}=
    \mu(H\cap A)+\mu(H \cap B)\\
    {}&{}=
    \mu(H\cap (A\cap B))+\mu(H\cap (A\cup B))\\
    {}&{}\geq 
    r(A\cap B)-\varepsilon+\mu(A\cup B)-\varepsilon.
    \end{align*}
    As this inequality holds for all $\varepsilon>0$, it follows that $r$ is submodular.
\end{proof}

With the help of Lemma~\ref{lem:submod}, first we show that $\cI(\Gb)$ defines a submodular function.

\begin{lem}\label{lem:exchange}
    Let $\Gb$ be a graphing. Then $\cI(\Gb)$ forms the family of independent sets of a measurable matroid with respect to the edge measure $\eta_\Gb$.
\end{lem}
\begin{proof}
It is not difficult to check that Properties~\ref{it:i0} and~\ref{it:i1} are satisfied, hence we concentrate on \ref{it:i2'}. Let $I_1,I_2\in\cI(Gb)$ and consider the graphing $\Hb=(V,\mu,I_1\cup I_2)$. Then $\eta_{\Hb}=\eta_{\Gb}|_{I_1\cup I_2}$. Let $I_3$ be an extension of $I_1$ such that $I_3$ is a hyperfinite essential spanning forest in $\Hb$; such an extensions exists by Corollary~\ref{COR:FOREST-EXTEND}. Then $\eta_\Gb(I_3)=\eta_\Gb(I_1\cup I_2)\ge \eta_\Gb(I_2)$, therefore the exchange property~\ref{it:i2'} holds.
   \end{proof}

Combining the above results, now we are ready to prove that $\rho_{\Gb}$ is submodular.

\begin{thm}
    Let $\Gb=(J,\mu,E)$ be a graphing. Then the setfunction $\rho_{\Gb}$ defined by \eqref{eqn:graphing_rank} is submodular.
\end{thm}
\begin{proof}
    Every graphing contains a hyperfintie essential spanning forest. Furthermore, by Corollary~\ref{cor:levitt}, the edge measure of a hyperfinite essential spanning forest is exactly the rank of the graphing. From these observations it follows that for any $A\subseteq E$, $\rho_\Gb(A)=\sup\{\eta_\Gb(F)\mid F\subseteq A\ \text{is a hyperfinite forest in}\ (J,\mu,A)\}$. By Lemma~\ref{lem:exchange}, hyperfinite subforests form the family of independents sets of a measurable matroid with respect to the edge measure $\eta_\Gb$. Therefore, the theorem follows by Lemma~\ref{lem:submod}.
\end{proof}
    
\section{Duality} 
\label{sec:duality}

In this section, we present various observations based on the idea that in an infinite planar graph, the complement of an essential spanning forest is an essential spanning forest of the dual.

Roughly speaking, a \emph{planar graphing} is a graphing $\Gb=(J,\mu,E)$ whose connected components are planar and are also endowed with a concrete embedding in a measurable way. The embedding is encoded by specifying the finite facial cycles of the components, and we assume this collection of edge sets to be a Borel set inside the standard Borel space of finite subsets of $E$. For the precise definition, see~\cite[Section 3.2]{conley2021one}. The component of a $\mu$-random vertex, together with the embedding is a \emph{unimodular random map} in the sense of~\cite{angel2018hyperbolic}.

We will, in fact, restrict our attention to the case when all facial cycles of $\Gb$ are finite, and refer to them as \emph{faces}. The assumption of $2$-vertex-connectivity and bounded size of faces mentioned in the Introduction also applies. The fact that the encoding is Borel ensures that we can talk about the dual graphing $\Gb^*=(J^*, \mu^*,E^*)$, where $J^*$ is the set of faces of $\Gb$. There is a measure preserving Borel bijection $\sigma\colon E \to E^*$ that denotes the usual correspondence: $\sigma(e)$ connects two distinct faces $x_1^*$ and $x_2^*$ if and only if $e$ is a common boundary edge of $x_1^*$ and $x_2^*$. For sake of simplicity,  we write $e^* = \sigma(e)$. For an edge set $F\subseteq E$, we use the notation $F^*=\{e^* \mid e \in F\}$.

Examples of planar graphings include Schreier graphings of p.m.p.\ actions of finitely generated groups with planar Cayley graphs (in particular surface groups), as well as Palm graphings of planar factor graphs of free invariant point processes on $\R^2$ or $\mathbb{H}^2$~\cite[Remark 17]{mellick2021palm}.

\begin{rem}
    For graphings, we usually assume $\mu(J)=1$ for convenience. For finite graphs this corresponds to normalization by the number of vertices. However, when duals are considered, this normalization is less natural, since the dual graph typically does not have the same number of vertices. It would probably be more natural to normalize by the number of edges which is the same for the two graphs. Nevertheless, to be consistent with the rest of the paper, we do not change the normalization, and simply accept that the vertex measure of the dual graphing is not necessarily 1.  
\end{rem}

\begin{rem}
    Limits of sequences of finite planar graphs are hyperfinite by the Lipton-Tarjan Planar Separation Theorem~\cite{lipton1979separator}. Nevertheless, planar graphings need not be hyperfinite. In fact, there is a sharp dichotomy between hyperfinite and non-hyperfinite planar graphings, see~\cite{angel2018hyperbolic} and~\cite{timar2024full}. 
\end{rem}  

We define the \emph{cocyclic} matroid of $\Gb$ by the infinite analogue of the usual dual rank function formula:
\[\rho^*_{\Gb}(F) = \rho_{\Gb}(E\setminus F) + \eta_\Gb  (F) - \rho_{\Gb} (E).\]
Note that $\rho^*_{\Gb}$ is submodular as it is the sum of a submodular and a modular function. Furthermore, $\rho_{\Gb}(E)\le1$, and equality holds if and only if almost all components of $\Gb$ are infinite. Moreover, the rank of the full edge set in the cocycle matroid is $\rho^*_{\Gb}(E)=\eta_\Gb (E)-1$. There is a nice correspondence between the elements of the associated minorizing measures: \[\alpha \in \bmm^+(\rho_{\Gb}) \iff \eta_\Gb -\alpha \in \bmm^+(\rho^*_{\Gb}).\]

\subsection{Duality for hyperfinite planar graphings}

For finite planar graphs, the cocycle matroid is the cycle matroid of the dual. In the Borel setting, this holds only for hyperfinite planar graphings. In fact, we will exploit exactly this discrepancy between $\rho_{\Gb}$ and $\rho^*_{\Gb^*}$ in Subsection~\ref{subsec:cost_matroid} to prove Theorem~\ref{thm:cost_achieveing_matroid}.

\begin{rem}
    Theorem~\ref{thm:hyperfinite_duality} is actually a reformulation of Euler's formula, stating that for any embedding of a finite planar graph, the equality $n+f=m+2$ holds, where $n$, $m$ and $f$ denote the number of vertices, edges and faces, respectively. For a planar graphing $\Gb$ with infinite components, the special case $\rho^*_{\Gb^*} (E^*) = \rho_{\Gb}(E)$ translates into $\mu(J)+\mu^*(J^*) =\eta_\Gb (E)$, that is, the $2$ in the formula disappears in the limit.
\end{rem}

We present two proofs for Theorem~\ref{thm:hyperfinite_duality}, one relying on Theorem~\ref{thm:exposed_integer}, the other using the Mass Transport Principle.

\begin{proof}[Proof of Theorem~\ref{thm:hyperfinite_duality} using Theorem~\ref{thm:exposed_integer}]
     First we prove $\rho^*_{\Gb^*}=\rho_\Gb$. We noted earlier that $\bmm^+(\rho^*_{\Gb^*}) = \eta_{\Gb^*}  - \bmm^+(\rho_{\Gb^*})$. By Theorem~\ref{thm:exposed_integer}, we know that exposed points of $\bmm^+(\rho_{\Gb})$ and $\bmm^+(\rho_{\Gb^*})$ are hyperfinite essential spanning forests of $\Gb$ and $\Gb^*$, respectively. Note also that they are in a one-to-one correspondence by complementation. Here we use that $\Gb$ and $\Gb^*$ are hyperfinite, to ensure that the complement is again hyperfinite.

     These observations together show that $\bmm^+(\rho_{\Gb})$ and $\bmm^+(\rho^*_{\Gb^*})$ have the same set of exposed points. This implies $\bmm^+(\rho_{\Gb}) = \bmm^+(\rho^*_{\Gb^*})$, as they are the weak* closure of the convex hull of the same set, see Remark~\ref{rem:exposed_are_dense}. Finally, this implies $\rho_{\Gb} = \rho^*_{\Gb^*}$ by the fact that a submodular function $\varphi$ is determined by $\bmm^+(\varphi)$ in the sense that $\varphi(A)=\sup_{\alpha \in \bmm^+(\varphi)} \alpha(A)$, see~\cite[Corollary 6.10]{lovasz2023submodular}. The proof of $\rho^*_{\Gb} = \rho_{\Gb^*}$ is analogous.
\end{proof}

\begin{proof}[Proof of Theorem~\ref{thm:hyperfinite_duality} using the Mass Transport Principle]
    By using the definitions of $\rho_{\Gb}$ and $\rho^*_{\Gb^*}$ and the equalities $\eta_{\Gb^*}=\eta_\Gb $ and $\rho_{\Gb^*}(E^*)= \mu^*(J^*)$, we obtain that $\rho_{\Gb}(F) = \rho^*_{\Gb^*}(F)$ is equivalent to 
    \begin{equation}\label{eqn:mass_transpont}
        \eta_\Gb(F) = \int_{x \in J} 1 - \frac{1}{|V(\Gb^F_x)|} \diff \mu(x) + \int_{x^* \in J^*} \frac{1}{|V(\Gb^{*,E^*\setminus F^*}_{x^*})|}  \diff \mu^*(x^*).
    \end{equation} 
    To verify the equality, we set up a payment between vertices, faces, and edges. Motivated by the second integral, every face $x^*$ that is in a finite $(E^* \setminus F^*)$-component pays $1/|V(\Gb^{*, E^* \setminus F^*}_{x^*})|$ token to the component. This way every finite $(E^* \setminus F^*)$-component collects 1 token in total, and this token is passed on to an edge (chosen in a Borel way) in the $F$-cycle surrounding the component. We can also make sure that the edges that get payed, oriented from the component that pays to them, form an acyclic oriented graph on the components of $E^* \setminus F^*$. This way the edges of $F$ that do not receive a payment form a subforest $F' \subseteq F$ that has the same connected components in $\Gb$ as $F$. 

    Next, every $x \in J$ in a finite $F$-component pays $1/|V(\Gb^F_x)|$ token to every $F'$-edge in its component. This way all such vertices pay $1-1/|V(\Gb^F_x)|$, and such $F$-edges receive $1$ token in total.

    What remains are infinite $F'$ components and their vertices. Write $F''$ for this edge set, and $J''$ for the vertices. Then $\eta_\Gb (F'') = \mu(J'')$ by Theorem~\ref{lem:levitt}, as $F''$ is a hyperfinite treeing on $J''$ with infinite components. In a mass transport formulation, this means that we can ensure that every vertex in $J''$ pays 1 token, and every edge in $F''$ gets 1 token.
    
    Concluding the above, vertices and faces payed in total the right hand side of \eqref{eqn:mass_transpont}, while every edge in $F$ received 1, so \eqref{eqn:mass_transpont} holds by the Mass Transport Principle.
\end{proof}

\subsection{A matroid attaining the cost}
\label{subsec:cost_matroid}

\begin{proof}[Proof of Theorem~\ref{thm:cost_achieveing_matroid}]
As we are in the non-hyperfinite case, the dual $\Gb^*$ has no $2$-ended components $\mu^*$-almost surely. By~\cite{conley2021one}, one can pick a 1-ended hyperfinite spanning forest $F^* \subseteq E^*$. The corresponding measure $\alpha_{F^*}$ is in $\bmm^+(\rho_{\Gb^*})$, implying $\eta_{\Gb^*} -\alpha_{F^*} \in \bmm^+(\rho^*_{\Gb^*})$. This measure, considered on $E(\Gb)$, corresponds to $E \setminus F$. The key idea, also exploited in~\cite{conley2021one}, is that $F^*$ being $1$-ended almost everywhere means that $E \setminus F$ is connected on almost every component of $\Gb$. That is, $E \setminus F$ is a treeing of $\Gb$. Therefore, $\rho^*_{\Gb^*} (E) = \alpha_{E\setminus F} (E) = \eta_\Gb  (E \setminus F) = \cost(\Gb)$, where the last inequality holds by Theorem~\ref{thm:gaboriau}.
\end{proof}

\subsection{Free versus wired spanning forests}

Let $\Gb$ be a graphing with a Borel order $<$ on the edges. We have already seen in Remark~\ref{rem:WmaxSF} that the algorithm for finding bases with respect to $\rho_{\Gb}$ produces the wired maximal spanning forest, denoted by $\Tb_w$. Recall that the free maximal spanning forest is a natural counterpart. We again refer to~\cite[Chapter 11]{lyons2017probability} for a thorough overview, with the caveat that here we treat arbitrary Borel orders on the edge set.

\begin{proof}[Proof of Theorem~\ref{thm:free_vs_wired}]
When $\Gb$ and $\Gb^*$ are dual planar graphings and the Borel total ordering is the same on $E$ and $E^*$, the free maximal spanning forest of $\Gb$ is exactly the complement of the wired minimal spanning forest of $\Gb^*$.
\end{proof}

\begin{rem}
    Based on Theorem~\ref{thm:free_vs_wired}, one could hope to use the free maximal spanning forest for arbitrary, not necessarily planar graphings to define an interesting matroid that goes beyond hyperfinite spanning forests. The problem is that the free maximal spanning forest built on two different Borel orders might have different edge measure, and so they cannot form bases of the same matroid. For example, let $\Gb$ be the Schreier graphing of a free action of $F_2 \times \Z$. Let $<_1$ be such that all $F_2$-edges are larger than all $\Z$-edges, and the ordering is reversed for $<_2$. Then, $\Tb^{<_1}_f$ contains exactly the $F_2$-edges while $\Tb^{<_2}_f$ contains exactly the $\Z$-edges, so their edge measures are $2$ and $1$, respectively. 
\end{rem}

\medskip

\paragraph{Acknowledgement.} The authors are grateful to Miklós Abért, Boglárka Gehér, András Imolay, Gábor Kun, Balázs Maga, Nicolas Monod, Gábor Pete, Carsten Thomassen, Tamás Titkos and Dániel Virosztek for helpful discussions. 

Márton Borbényi was supported by the \'{U}NKP-23-3 New National Excellence Program of the Ministry for Culture and Innovation from the source of the National Research, Development and Innovation Fund (NKFIH). László Márton Tóth was supported by the NKFIH grant KKP-139502, ''Groups and graph limits''. The research was supported by the Lend\"ulet Programme of the Hungarian Academy of Sciences -- grant number LP2021-1/2021, and by Dynasnet European Research Council Synergy project -- grant number ERC-2018-SYG 810115.

\bibliographystyle{abbrv}
\bibliography{graphic}

\begin{thebibliography}{10}

\bibitem{adams1990trees}
S.~Adams.
\newblock Trees and amenable equivalence relations.
\newblock {\em Ergodic Theory and Dynamical Systems}, 10(1):1--14, 1990.

\bibitem{aldous-lyons2007}
D.~Aldous and R.~Lyons.
\newblock Processes on unimodular random networks.
\newblock {\em Electronic Journal of Probability}, 12(54):1454--1508, 2007.

\bibitem{angel2018hyperbolic}
O.~Angel, T.~Hutchcroft, A.~Nachmias, and G.~Ray.
\newblock Hyperbolic and parabolic unimodular random maps.
\newblock {\em Geometric and Functional Analysis}, 28:879--942, 2018.

\bibitem{uniqueness}
K.~B{\'e}rczi, M.~Borb{\'e}nyi, L.~Lov{\'a}sz, and L.~M. T{\'o}th.
\newblock Cycle matroids that determine their graphing.
\newblock {\em Manuscript}, 2024+.

\bibitem{berczi2024quotient}
K.~B{\'e}rczi, M.~Borb{\'e}nyi, L.~Lov{\'a}sz, and L.~M. T{\'o}th.
\newblock Quotient-convergence of submodular setfunctions.
\newblock {\em arXiv preprint arXiv:2406.08942}, 2024.

\bibitem{berczi2024monotonic}
K.~B{\'e}rczi, B.~Geh{\'e}r, A.~Imolay, L.~Lov{\'a}sz, and T.~Schwarcz.
\newblock Monotonic decompositions of submodular set functions.
\newblock {\em arXiv preprint arXiv:2406.04728}, 2024.

\bibitem{bowler2018infinite}
N.~Bowler, J.~Carmesin, and R.~Christian.
\newblock Infinite graphic matroids.
\newblock {\em Combinatorica}, 38:305--339, 2018.

\bibitem{bruhn2011infinite}
H.~Bruhn and R.~Diestel.
\newblock Infinite matroids in graphs.
\newblock {\em Discrete mathematics}, 311(15):1461--1471, 2011.

\bibitem{bruhn2013axioms}
H.~Bruhn, R.~Diestel, M.~Kriesell, R.~Pendavingh, and P.~Wollan.
\newblock Axioms for infinite matroids.
\newblock {\em Advances in Mathematics}, 239:18--46, 2013.

\bibitem{conley2021one}
C.~T. Conley, D.~Gaboriau, A.~S. Marks, and R.~D. Tucker-Drob.
\newblock One-ended spanning subforests and treeability of groups.
\newblock {\em arXiv preprint arXiv:2104.07431}, 2021.

\bibitem{gaboriau2023around}
D.~Gaboriau.
\newblock Around the orbit equivalence theory of the free groups, cost and
  $\ell^2$ {B}etti numbers.
\newblock Lecture notes.

\bibitem{gaboriau2000cout}
D.~Gaboriau.
\newblock Co{\^u}t des relations d’{\'e}quivalence et des groupes.
\newblock {\em Inventiones mathematicae}, 139(1):41--98, 2000.

\bibitem{hatami2014limits}
H.~Hatami, L.~Lov{\'a}sz, and B.~Szegedy.
\newblock Limits of locally--globally convergent graph sequences.
\newblock {\em Geometric and Functional Analysis}, 24(1):269--296, 2014.

\bibitem{higgs1969matroids}
D.~A. Higgs.
\newblock Matroids and duality.
\newblock {\em Colloquium Mathematicum}, 2(20):215--220, 1969.

\bibitem{kardovs2017first}
F.~Kardoš, D.~Král’, A.~Liebenau, and L.~Mach.
\newblock First order convergence of matroids.
\newblock {\em European Journal of Combinatorics}, 59:150--168, 2017.

\bibitem{kechris2004topics}
A.~Kechris and B.~D. Miller.
\newblock {\em Topics in orbit equivalence}.
\newblock Springer, 2004.

\bibitem{klee1958extremal}
V.~L. Klee~Jr.
\newblock Extremal structure of convex sets. ii.
\newblock {\em Mathematische Zeitschrift}, 69(1):90--104, 1958.

\bibitem{kruskal1956shortest}
J.~B. Kruskal.
\newblock On the shortest spanning subtree of a graph and the traveling
  salesman problem.
\newblock {\em Proceedings of the American Mathematical society}, 7(1):48--50,
  1956.

\bibitem{levitt1995cost}
G.~Levitt.
\newblock On the cost of generating an equivalence relation.
\newblock {\em Ergodic theory and dynamical systems}, 15(6):1173--1181, 1995.

\bibitem{lipton1979separator}
R.~J. Lipton and R.~E. Tarjan.
\newblock A separator theorem for planar graphs.
\newblock {\em SIAM Journal on Applied Mathematics}, 36(2):177--189, 1979.

\bibitem{lovasz2012large}
L.~Lov{\'a}sz.
\newblock {\em Large networks and graph limits}, volume~60.
\newblock American Mathematical Soc., 2012.

\bibitem{lovasz2023submodular}
L.~Lov{\'a}sz.
\newblock Submodular setfunctions on sigma-algebras.
\newblock {\em arXiv preprint arXiv:2302.04704}, 2023.

\bibitem{lovasz2023matroid}
L.~Lovász.
\newblock The matroid of a graphing.
\newblock {\em Journal of Combinatorial Theory, Series B}, 169:542--560, 2024.

\bibitem{lyons2017probability}
R.~Lyons and Y.~Peres.
\newblock {\em Probability on trees and networks}, volume~42.
\newblock Cambridge University Press, 2017.

\bibitem{mellick2021palm}
S.~Mellick.
\newblock The {P}alm groupoid of a point process and factor graphs on amenable
  and {P}roperty ({T}) groups.
\newblock {\em arXiv preprint arXiv:2101.07238}, 2021.

\bibitem{nesetril}
J.~Ne{\v{s}}et{\v{r}}il and P.~Ossona~de Mendez.
\newblock {\em A unified approach to structural limits and limits of graphs
  with bounded tree-depth}, volume 263 (1272).
\newblock American Mathematical Society, 2020.

\bibitem{oxley2011matroid}
J.~Oxley.
\newblock {\em Matroid Theory}, volume~21 of {\em Oxford Graduate Texts in
  Mathematics}.
\newblock Oxford University Press, Oxford, second edition, 2011.

\bibitem{simons2007hahn}
S.~Simons.
\newblock The {H}ahn--{B}anach--{L}agrange theorem.
\newblock {\em Optimization}, 56(1-2):149--169, 2007.

\bibitem{timar2024full}
{\'A}.~Tim{\'a}r and L.~M. T{\'o}th.
\newblock A full characterization of invariant embeddability of unimodular
  planar graphs.
\newblock {\em Random Structures \& Algorithms}, 64(2):320--353, 2024.

\end{thebibliography}

\end{document}